\documentclass[10pt]{article}

\textwidth=16cm \textheight=23cm
\setlength{\hoffset}{-1.5cm} \setlength{\voffset}{-2.0cm}

\usepackage[utf8]{inputenc}
\usepackage{graphicx}
\usepackage{amsthm}
\usepackage{amsmath}
\usepackage{amssymb}
\usepackage{amsfonts}
\usepackage{mathtools}
\usepackage{dsfont}
\usepackage{mathrsfs}
\usepackage{enumerate}
\usepackage[normalem]{ulem}
\usepackage{todonotes}
\usepackage{ifthen}
\usepackage{hyperref}

\newtheorem{proposition}{Proposition}

\newtheorem{lemma}[proposition]{Lemma}
\newtheorem{corollary}[proposition]{Corollary}
\newtheorem{theorem}[proposition]{Theorem}

\newtheorem{fact}[proposition]{Fact}

\newtheorem{definition}[proposition]{Definition}

\newcommand{\N}{\mathbb{N}}
\newcommand{\R}{\mathbb{R}}
\newcommand{\Z}{\mathbb{Z}}
\newcommand{\C}{\mathbb{C}}
\newcommand{\Q}{\mathbb{Q}}
\newcommand{\A}{\mathscr{A}}
\newcommand{\mH}{\mathscr{H}}

\newcommand{\B}{\mathscr{B}}

\newcommand{\abs}[1]{\left | #1 \right |}

\newcommand{\map}[3]{{#1} \colon {#2} \to {#3}}
\newcommand{\eps}{\varepsilon}
\renewcommand{\phi}{\varphi}
\newcommand{\Ho}{\mbox{$H_0(\hat\C\setminus \R)$}}
\newcommand{\f}[1]{\widetilde{#1}}
\newcommand{\Ar}{\A(\R)}
\newcommand{\norm}[1]{\left\lVert #1 \right\rVert}
\newcommand{\LL}[0]{\mathcal{L}}
\newcommand{\Tt}{(T_t)_{t\geq 0}}

\newcommand{\dy}{\mathrm{d}y}
\newcommand{\ds}{\mathrm{d}s}

\DeclareMathOperator{\Rea}{Re}
\DeclareMathOperator{\Ima}{Im}

\DeclareMathOperator{\ind}{ind}

\DeclareMathOperator{\Int}{Int}

\newcommand{\indH}{\underset{\R\subset U}{\ind} H(U)}

\begin{document}

\title{{Semigroups of Hadamard multipliers on the space of real analytic functions}}
\author{{Anna Goli\'nska}}

 \maketitle

\begin{abstract}
An operator $M$ acting on the space of real analytic functions $\Ar$ is called a multiplier if every monomial is its eigenvector. In this paper we state some results concerning the problem of generating strongly continuous semigroups by multipliers. In particular we show when the Euler differential operator of finite order is a generator and when it is not. 

\end{abstract}

 \footnotetext[1]{{{\em 2010 Mathematics Subject Classification.}
Primary: 47D06 Secondary: 26E05, 30B40, 46E10 }

{\em Key words and phrases:} strongly continuous semigroup, space of real analytic functions, Hadamard multipliers, Euler
differential operator

Author is supported by the National Science Centre (Poland) grant no. 2013/10/A/ST1/00091.}

\section{Introduction} 

By $\Ar$ we will denote the space of real analytic functions with its natural inductive topology, i.e.
\[
\Ar = \ind_{\R\subset U}H(U),
\]
where $U$ runs over all complex neighbourhoods of $\R$ and $H(U)$ is equipped with the usual compact-open topology. The topology of $\Ar$ is complicated, but we will only need the following special case for convergent sequences:

\begin{fact}
  A sequence $(f_n)$ converges to $f$ in the topology of $\Ar$ if and only if all the functions $f_n$ and $f$ extend as holomorphic functions to a complex neighbourhood $U$ of $\R$ and $f_n \to f$ in $H(U)$. 
\end{fact}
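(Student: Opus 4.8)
The plan is to prove the two directions separately; the ``if'' direction is immediate, the ``only if'' direction being the substance. For the ``if'' direction, suppose all the $f_n$ and $f$ extend holomorphically to a fixed complex neighbourhood $U$ of $\R$ with $f_n\to f$ in $H(U)$; since the canonical linking map $H(U)\to\Ar$ into the inductive limit is continuous by the very definition of the topology of $\Ar$, we get $f_n\to f$ in $\Ar$, and nothing more is needed.

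For the ``only if'' direction, assume $f_n\to f$ in $\Ar$. I would use the classical description of this space, due to Martineau. First, for a compact $K\subset\R$ the space $\A(K)=\varinjlim_{W\supset K}H(W)$ is a (DFS)-space, that is, a countable inductive limit of Banach spaces with compact linking maps: over any decreasing neighbourhood basis $V_1\supset V_2\supset\cdots$ of $K$ consisting of bounded open sets with $\overline{V_{m+1}}\subset V_m$ one has $\A(K)=\varinjlim_m H^\infty(V_m)$, where $H^\infty(V_m)$ is the Banach space of bounded holomorphic functions on $V_m$ with the supremum norm, and the linking map $H^\infty(V_m)\to H^\infty(V_{m+1})$ is compact because, by Cauchy's estimates, a uniformly bounded family of holomorphic functions on $V_m$ is equicontinuous, hence relatively compact, on $\overline{V_{m+1}}$. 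Second, with $K_k=[-k,k]$ one has, as locally convex spaces,
\[
\Ar=\varprojlim_{k}\A(K_k),
\]
the projective spectrum carrying the restriction maps. A sequence converges in a projective limit precisely when its image converges in every factor, so $f_n|_{K_k}\to f|_{K_k}$ in $\A(K_k)$ for each $k$. Since (DFS)-spaces are boundedly retractive, a convergent sequence in $\A(K_k)$ already lies in, and converges in the norm of, a single step; hence for each $k$ there is a bounded open neighbourhood $W_k$ of $K_k$ in $\C$ such that all the $f_n$ and $f$ extend holomorphically to $W_k$ and $f_n\to f$ uniformly on $\overline{W_k}$.

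It remains to glue the local data. Constructing the $W_k$ inductively, I may additionally arrange $W_k\subset W_{k+1}$ and that every connected component of $W_k$ meets the interval $K_k$. Then $U:=\bigcup_k W_k$ is an open neighbourhood of $\R=\bigcup_k K_k$ in $\C$, and by uniqueness of analytic continuation from a real interval the holomorphic extensions of a given $f_n$ (and of $f$) to the various $W_k$ agree on overlaps and therefore define a single holomorphic function on $U$; this is the extension asserted in the statement. Finally, any compact $L\subset U$ is covered by finitely many, hence --- the $W_k$ being increasing --- by a single $W_k$, so $\sup_L|f_n-f|\le\sup_{\overline{W_k}}|f_n-f|\to0$, which is exactly $f_n\to f$ in $H(U)$.

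The decisive point, and the one I expect to be the real obstacle, is the bounded retractivity of the (DFS)-spaces $\A(K_k)$: it rests on compactness of their linking maps and is precisely what produces, for each $k$, a \emph{single} complex neighbourhood of $K_k$ on which the whole sequence lives and converges; in a non-regular inductive limit no such step need exist and the patching would have nothing to work with. The identification of the original inductive topology on $\Ar$ with the projective-limit description above is standard and can simply be quoted, and the patching step is routine once one is careful enough about connectivity that the analytic continuation is unambiguous.
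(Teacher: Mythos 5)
The paper states this Fact without proof --- it is Martineau's classical description of sequential convergence in $\A(\R)$, contained in the cited survey \cite{Dnotes} --- so there is no in-paper argument to compare against; your proposal supplies the standard proof. Its skeleton is right: the ``if'' direction via continuity of the canonical map $H(U)\to\Ar$, and the ``only if'' direction via the identification $\Ar=\varprojlim_k\A([-k,k])$ together with the bounded (sequential) retractivity of the (DFS)-spaces $\A([-k,k])$, which is indeed the decisive point producing, for each $k$, a single neighbourhood $W_k$ of $[-k,k]$ on which the whole sequence lives and converges uniformly.

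The one step that needs tightening is the gluing. Requiring that every component of $W_k$ meet $[-k,k]$ does not by itself make the extensions compatible: to arrange $W_k\subset W_{k+1}$ you must merge $W_k$ with the neighbourhood produced by retractivity at level $k+1$, and the two holomorphic extensions of a given $f_n$ are only forced to agree on those components of the overlap that meet $\R$; a component of the intersection lying off the real axis can carry different branches (consider a function with a branch point near, but not on, $\R$). The routine fix is to shrink each neighbourhood to a convex ``stadium'' $V_k=\bigcup_{x\in[-k,k]}D(x,\delta_k)$ contained in $W_k$: convex sets intersect in convex, hence connected, sets containing $[-k,k]$, so all extensions agree on all overlaps by the identity theorem, $U=\bigcup_kV_k$ is a well-defined complex neighbourhood of $\R$, and uniform convergence on compact subsets of $U$ follows as you say, since any compact set is covered by finitely many $V_k$. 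With that adjustment your proof is complete.
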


Let $L(\Ar)$ be the space of all linear continuous operators on the space of real analytic functions $\Ar$ with the topology of uniform convergence on bounded sets of $\Ar$. We say that an operator $M\in L(\Ar)$ is a mutliplier, if every monomial is its eigenvector, i.e. 
\begin{align*}
M(x^n)=m_nx^n \; \text{for all $n\in\N$}.
\end{align*}
We call the sequence $(m_n)_{n\in\N}$ a multiplier sequence. Since monomials are linearly dense in $\Ar$ a multiplier is uniquely determined by its multiplier sequence. By $(M,(m_n))$ we will denote the multiplier $M$ with the multiplier sequence $(m_n)_{n\in\N}$. We denote by $M(\R)$ the space of all multipliers and equip it with the topology induced from $L(\Ar)$. The basic examples of multipiers are:
\begin{itemize}
\item
Euler differential operator
\[
 Ef(x)=xf'(x),
\]                                                                                                                                                                                                                                                                                                                                                                                                                         \item 
dilation operator
\[
D_af(x)=f(ax),
\]    
\item
Hardy operator
\[
 Hf(x)=\frac{1}{x}\int_0^xf(y)dy.
\]
\end{itemize}

For more information on multipliers on $\Ar$ we refer to \cite{DL1,DL2,DL}.

In this paper we consider semigroups generated by multipliers. 
Consider the abstract Cauchy problem
\begin{align}
 \begin{split}
 \frac{\partial}{\partial t} u(t)&= M u(t),\\
 u(0)&= f,
 \label{evolutionEq}
 \end{split}
\end{align}
where $M\in M(\R)$, $f\in \Ar$. A classical approach to solve (\ref{evolutionEq}) is to study if the operator $M$ generates a strongly continuous semigroup of bounded linear operators $\{T_t: t\geq 0\}$. In this paper we will try to answer the question: Which multipliers generate a strongly continuous semigroup? Note that on a non-Banach locally convex space, a continuous linear operator does not always generate a strongly continuous semigroup.

\section{Preliminaries}
In this section we will introduce some notation and recall basic facts from the general theory of semigroups (more details can be found in \cite{Komura}).

Let $X$ be a locally convex space and $(T_t)_{t\geq 0}$ a family of bounded operators on $X$. The family $(T_t)_{t\geq 0}$ is said to be a semigroup, if it satisfies the following conditions:
\begin{enumerate}[$(1)$]
\item $T_tT_s=T_{t+s}$ for all $t,s\geq 0$,
\item $T_0=I$ (the identity operator).
\end{enumerate} 
If in addition it satisifies
\begin{enumerate}[$(3)$]
\item $\lim_{t\rightarrow s}T_tx=T_sx$ for any $s\geq 0$ and any $x\in X$.
\end{enumerate}
then $(T_t)_{t\geq 0}$ is called a $C_0$-semigroup (or strongly continuous  semigroup).

If the above properties $(1)$-$(3)$ hold for $t,s\in \R$ instead of $t,s\in \R_+:=[0,\infty)$ we call $(T_t)_{t\in\R}$ a $C_0$-group.

The generator $(A,D(A))$ of a strongly continuous semigroup $\Tt$ on $X$ is the operator
\[
 Ax= \lim_{t\rightarrow 0} \frac{T_tx-x}{t}=\left.\frac{\partial T_tx}{\partial t}\right\vert_{t=0}
\]
defined for every $x$ in its domain
\[
 D(A)= \{x\in X: \lim_{t\rightarrow 0} \frac{T_tx-x}{t} \;\;\text{exists}\}.
\]

If $X$ is a Banach space, then the well known spectral inclusion theorem holds (\cite[2.5]{EN}). In an arbitrary locally convex space, the similar property holds for the point spectrum.

\begin{lemma}\label{fakt-mnoznikiTt}
Let $(A, D(A))$ be a generator of a strongly continuous semigroup $\Tt$ acting on a locally convex space $X$. If $x$ is an eigenvector of $A$ with eigenvalue $\lambda$ then for every $t\geq 0$ the following holds
\begin{align*}
T_tx=e^{t\lambda}x.
\end{align*}
\end{lemma}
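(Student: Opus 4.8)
The plan is to exploit the fundamental differentiability property of orbit maps of $C_0$-semigroups together with duality, reducing everything to the scalar linear ODE $g'=\lambda g$. First note that an eigenvector $x$ of $A$ lies by definition in $D(A)$, and for such $x$ the general theory of strongly continuous semigroups on locally convex spaces (see \cite{Komura}) guarantees that the orbit map $t\mapsto T_tx$ is differentiable on $[0,\infty)$ with
\[
\frac{d}{dt}T_tx = T_t A x = A T_t x .
\]
Since $Ax=\lambda x$, this yields $\frac{d}{dt}T_tx = \lambda\, T_tx$ for all $t\geq 0$, with initial value $T_0x=x$.

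Next I would test this identity against continuous linear functionals. Fix $\phi$ in the topological dual $X'$ and set $g(t)=\phi(T_tx)$. Since $\phi$ is linear and continuous it commutes with the derivative of the (differentiable) orbit map, so $g$ is a differentiable scalar function satisfying $g'(t)=\lambda g(t)$ and $g(0)=\phi(x)$. The classical uniqueness theorem for this scalar linear ODE forces
\[
g(t)=e^{t\lambda}\phi(x)=\phi\!\left(e^{t\lambda}x\right)\qquad\text{for all }t\geq 0 .
\]
Finally, because $X$ is a Hausdorff locally convex space, the Hahn--Banach theorem ensures that $X'$ separates the points of $X$; as $\phi(T_tx)=\phi(e^{t\lambda}x)$ for every $\phi\in X'$ and every $t\geq 0$, we conclude $T_tx=e^{t\lambda}x$.

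I expect the only point genuinely requiring care to be the differentiability of the orbit map $t\mapsto T_tx$ for $x\in D(A)$: in the Banach setting this is routine, and it remains valid for $C_0$-semigroups on locally convex spaces, but one should invoke the appropriate statement from \cite{Komura} rather than reprove it. Everything else is the scalar computation plus a separation argument. One could alternatively bypass the dual space by considering $v(t)=e^{-t\lambda}T_tx$ directly, verifying $v'(t)=0$, and invoking the principle that a function with vanishing derivative is constant; but in a general locally convex space that principle is itself most cleanly obtained through $X'$, so the duality route seems the most economical.
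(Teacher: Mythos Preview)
Your argument is correct, but it follows a different route from the paper's proof. You differentiate the orbit map, reduce to the scalar ODE $g'=\lambda g$ via continuous linear functionals, and then invoke Hahn--Banach separation to recover the vector-valued identity. The paper instead passes to the rescaled semigroup $S_t=e^{-t\lambda}T_t$, identifies its generator as $B=A-\lambda$, and applies the \emph{integral} identity $S_tx-x=\int_0^t S_s(A-\lambda)x\,\mathrm{d}s$ from \cite[1.2]{Komura}; since $(A-\lambda)x=0$, the right-hand side vanishes outright, giving $T_tx=e^{t\lambda}x$ with no appeal to duality or to ODE uniqueness. Amusingly, the alternative you sketch at the end---working with $v(t)=e^{-t\lambda}T_tx$---is precisely the paper's approach, except that the paper uses the integral formula rather than computing $v'$ and then arguing that $v'=0$ forces $v$ constant. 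The integral formula makes that last step a triviality (the integral of zero is zero), so no separation argument is needed and no Hausdorff hypothesis has to be invoked explicitly. Your route is perfectly sound and perhaps more conceptually transparent; the paper's is a bit more economical in its hypotheses.
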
 

\begin{proof}
For a fixed eigenvector $x$ with eigenvalue $\lambda$ denote by $(S_t)_{t\geq 0}$ the rescaled semigroup $S_t=e^{-t\lambda}T_t$. 
Clearly the semigroup $(S_t)_{t\geq 0}$ is strongly continuous. We denote by $B$ the generator of $(S_t)$. For every $x\in X$ we have 
\begin{align*}
\frac{S_tx-x}{t}=\frac{e^{-\lambda t}T_tx-x}{t}
= \frac{e^{-\lambda t}T_tx-T_tx+T_tx-x}{t}
=\frac{e^{-\lambda t}-1}{t}T_tx+\frac{T_tx-x}{t}.   
\end{align*} 
Since 
\begin{align*}
\frac{e^{-\lambda t}-1}{t}T_tx \xrightarrow{t\searrow 0} -\lambda x  
\end{align*}
we observe that $D(B)=D(A)$ and $B=A-\lambda$.

For $x\in D(A-\lambda)$ by (\cite[1.2]{Komura}) we have  
\begin{align*}
S_tx-x= \int_0^t S_s(A-\lambda)x \ds.
\end{align*}
Hence
\begin{align*}
e^{-\lambda t}T_tx-x= \int_0^t e^{-\lambda t}T_s(A-\lambda)x \ds
\end{align*}
As $Ax=\lambda x$ the right hand side equals $0$ and we have 
\begin{align*}
T_tx=e^{t\lambda}x.
\end{align*}
\end{proof}

It follows that

\begin{corollary}If a $C_0$-semigroup $\Tt$ is generated by a multipier $(M,(m_n))$, then it is a semigroup of multipliers. Moreover, for every $t\in\R_+$ the multipier sequence of $(T_t,(m^t_n))$ is given by $m_n^t=\exp(tm_n)$.
\end{corollary}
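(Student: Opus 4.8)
The plan is to derive the corollary directly from Lemma~\ref{fakt-mnoznikiTt} together with the defining property of a multiplier. First I would recall that, by assumption, the generator of $\Tt$ is the multiplier $(M,(m_n))$, so every monomial $x^n$ is an eigenvector of $M$ with eigenvalue $m_n$. Applying Lemma~\ref{fakt-mnoznikiTt} with $x=x^n$ and $\lambda=m_n$ yields $T_t(x^n)=e^{tm_n}x^n$ for every $t\geq 0$ and every $n\in\N$. This already shows that each $T_t$ acts diagonally on monomials with the prescribed eigenvalues $m_n^t:=\exp(tm_n)$.

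The remaining point is to argue that this property makes $T_t$ a multiplier in the sense defined in the introduction, i.e.\ that $T_t\in M(\R)$. Here I would simply invoke that $T_t$ is by hypothesis a bounded (continuous) linear operator on $\Ar$ — this is part of the definition of a $C_0$-semigroup — and that it has every monomial as an eigenvector, which is exactly the definition of a multiplier given in the introduction. Hence $(T_t,(m_n^t))$ is a multiplier for each $t\geq 0$, and the identity $m_n^t=\exp(tm_n)$ is precisely what Lemma~\ref{fakt-mnoznikiTt} delivered. The semigroup law $T_{t+s}=T_tT_s$ then also translates into $m_n^{t+s}=m_n^t m_n^s$, consistently with $\exp((t+s)m_n)=\exp(tm_n)\exp(sm_n)$, though this is a consequence rather than something needing separate verification.

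I expect no serious obstacle here; the statement is essentially a restatement of Lemma~\ref{fakt-mnoznikiTt} in the language of multiplier sequences. The only mild subtlety worth a sentence is the passage from ``$T_t$ fixes the eigenvectors $x^n$'' to ``$T_t$ is determined as a multiplier'': since the monomials are linearly dense in $\Ar$ (as noted in the introduction) and $T_t$ is continuous, the values $T_t(x^n)=m_n^t x^n$ determine $T_t$ uniquely, so $(T_t,(m_n^t))$ is well defined as a multiplier with the stated sequence. I would close by remarking that continuity of $T_t$ is guaranteed because it belongs to a $C_0$-semigroup, so membership in $M(\R)$ is automatic once the eigenvector property is established.
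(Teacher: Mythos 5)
Your argument is correct and matches the paper's intent exactly: the paper derives this corollary directly from Lemma~\ref{fakt-mnoznikiTt} (stating only ``It follows that''), by applying the lemma to each monomial $x^n$, which is an eigenvector of $M$ with eigenvalue $m_n$, to obtain $T_t x^n = e^{tm_n}x^n$, and then noting that each $T_t$ is a continuous operator satisfying the defining eigenvector property of a multiplier. Your added remarks on density of monomials and continuity of $T_t$ are exactly the right (routine) justifications the paper leaves implicit.
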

 

We now  present some properties of the algebra of multipliers $M(\R)$.
%
We denote by $\hat{\C}$ the Riemann sphere and by $\Ho$ the space of holomorphic functions around infinity, vanishing at infinity, which extend to holomorphic functions on $\hat{\C} \setminus \R$ i.e.
\[
 \Ho= \bigcup_{N\in\N} H_0(\hat\C\setminus [-N,N]).
\]

The space $\Ho$ equipped with the Hadamard multiplication of Laurent series, i.e.
\begin{align*}
 & f*g(z)=\sum_{n=0}^\infty \frac{f_ng_n}{z^{n+1}} \quad \text{around infinity} \\
\end{align*}
where
\begin{align*}
&f(z)=\sum_{n=0}^\infty \frac{f_n}{z^{n+1}}, \;\; g(z)=\sum_{n=0}^\infty \frac{g_n}{z^{n+1}} \quad \text{around infinity},
\end{align*}
forms an algebra. The algebra $\Ho$ is isomorphic to the algebra $H(\hat\C\setminus \frac{1}{\R})$ of functions holomorphic at zero which extend to holomorphic functions on $\C\setminus\R$ with Hadamard multiplication of Taylor series, i.e
\begin{align*}
 & f*g(z)=\sum_{n=0}^\infty {f_ng_n}{z^{n}} \quad \text{around zero} \\
 \intertext{where}\\
 &f(z)=\sum_{n=0}^\infty {f_n}{z^{n}}, \;\; g(z)=\sum_{n=0}^\infty {g_n}{z^{n}} \quad \text{around zero}.
\end{align*}
The isomorphism is given by the map $\phi(f)(z)=\frac{1}{z}f(\frac{1}{z})$.

To make the paper self contained we cite multiplier's representation theorem from \cite{DL1} which we will need later.

\begin{theorem}[{\cite[2.8]{DL1}}]\label{DL1}
The algebra of multipliers $M(\R)$ is topologically isomorphic as an algebra with the following algrebras of holomorphic functions:
\begin{enumerate}[$(1)$]
\item 
$\Ho$ with Hadamard multiplication of Laurent series,
\item
$H(\hat\C\setminus \frac{1}{\R})$ with Hadamard multiplication of Taylor series.

\end{enumerate}
The multiplier sequence of the given multiplier is equal to the Laurent (Taylor) coefficients at infinity (zero) $(f_n)$ of the corresponding function $f$.

\end{theorem}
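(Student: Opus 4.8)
The plan is to construct an explicit topological algebra isomorphism $\Theta\colon\Ho\to M(\R)$ and to read the Taylor picture off from it for free. Given $f\in\Ho$ with Laurent expansion $f(z)=\sum_{n\ge 0}f_n z^{-n-1}$ at infinity, define $\Theta(f)$ to be the candidate multiplier $M_f$ with multiplier sequence $(f_n)$, i.e.\ $M_f(x^n)=f_n x^n$. The argument then breaks into four parts: (a) $M_f$ is a well defined continuous operator on $\Ar$ and $f\mapsto M_f$ is continuous; (b) $\Theta$ is a bijection onto $M(\R)$; (c) $\Theta^{-1}$ is continuous; (d) $\Theta$ is multiplicative. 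Once this is done, composing with the algebra isomorphism $\phi(f)(z)=\frac1z f(\frac1z)$, which carries the Laurent coefficients of $f$ to the Taylor coefficients of $\phi(f)$ unchanged, gives the second assertion of the theorem.

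For (a) the tool is the classical Hadamard multiplication formula. Put $m:=\phi(f)$, so that $m(\zeta)=\sum_{n\ge 0}f_n\zeta^n$ is holomorphic on $(\C\setminus\R)\cup(-1/N,1/N)$ for a suitable $N$. For $g\in\Ar$, holomorphic on a complex neighbourhood $U$ of $\R$, one wants to set
\[
(M_f g)(x)=\frac{1}{2\pi i}\int_{\Gamma}m(\zeta)\,g\!\left(\frac{x}{\zeta}\right)\frac{d\zeta}{\zeta}.
\]
Over a small circle $\Gamma=\{|\zeta|=\rho\}$ with $\rho<1/N$ this integral equals $\sum_n f_n g_n x^n$ near $x=0$ (with $g=\sum g_n x^n$), so that indeed $M_f(x^n)=f_n x^n$. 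The substance of the step is to deform $\Gamma$ into $\C\setminus\R$: one replaces the circle by a cycle that still winds once around $0$ but is pushed out to large modulus, staying off the two real half-lines $\{\,t\in\R:|t|\ge 1/N\,\}$ on which $m$ may fail to be holomorphic. Since $m$ is holomorphic throughout $\C\setminus\R$ and $g(x/\zeta)$ is holomorphic as long as $x/\zeta$ stays in $U$, this deformation (and letting the cycle recede to infinity to absorb large $|x|$) shows that $(M_f g)(x)$ continues holomorphically to a full complex neighbourhood of $\R$, i.e.\ $M_f g\in\Ar$. The Cauchy estimates attached to the integral give bounds of the form $\sup_{x\in L}|(M_f g)(x)|\le C\sup_{x\in L'}|g(x)|$, uniform enough to yield continuity of $M_f\colon H(U)\to H(U')$, hence on $\Ar$, and, tracking the dependence on $f$, continuity of $f\mapsto M_f$ from each step $H_0(\hat\C\setminus[-N,N])$ into $L(\Ar)$, hence of $\Theta$ on $\Ho$.

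Step (b): injectivity is immediate, since the Laurent coefficients of $f$ are recovered as the multiplier sequence and a multiplier is determined by its sequence. For surjectivity, let $(M,(m_n))$ be any multiplier. Because $M$ is continuous and $\delta_1\in\Ar'$ (evaluation at $1$), the functional $S:=\delta_1\circ M$ lies in $\Ar'$. Now invoke the classical duality between $\Ar$ and $\Ho$: the Cauchy transform $\mathcal C(T)(w)=\langle T_x,\tfrac1{w-x}\rangle$ is a topological isomorphism of the strong dual $\Ar'$ onto $\Ho$. If $T$ is carried by $[-N,N]$, then expanding $\tfrac1{w-x}=\sum_n x^n w^{-n-1}$ uniformly on a fixed bounded neighbourhood of $[-N,N]$ for $|w|$ large shows that the Laurent coefficients at infinity of $\mathcal C(T)$ are exactly $\langle T,x^n\rangle$. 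For $T=S$ this gives $\langle\delta_1,Mx^n\rangle=(m_n x^n)\big|_{x=1}=m_n$; hence $\sum_n m_n z^{-n-1}=\mathcal C(S)\in\Ho$ (in particular $\limsup_n|m_n|^{1/n}<\infty$, which one can also see at once by testing $M$ on a single function $1/(1-cx)$ with $c\notin\R$). Thus $M=\Theta(\mathcal C(S))$, proving surjectivity. The same chain proves (c): $M\mapsto\delta_1\circ M$ is continuous from $L(\Ar)$ to the strong dual $\Ar'$, and $\mathcal C$ is a topological isomorphism onto $\Ho$, so $\Theta^{-1}\colon M\mapsto\mathcal C(\delta_1\circ M)$ is continuous.

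Finally (d): the composite of two multipliers $(M,(m_n))$ and $(L,(l_n))$ has multiplier sequence $(m_n l_n)$, which is precisely the sequence of Laurent coefficients of the Hadamard product of the two symbols, so $\Theta$ is an algebra isomorphism. I expect the crux to be step (a): routing the Hadamard contour through $\C\setminus\R$ and controlling it uniformly in $x$ (and in $f$) so that the output is genuinely real analytic on all of $\R$, with estimates good enough for the two continuity statements — the remaining parts are either formal (multiplicativity) or rest on the classical $\Ar'\cong\Ho$ duality (surjectivity and continuity of $\Theta^{-1}$), which would be quoted from the preliminaries.
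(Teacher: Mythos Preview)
The paper does not prove this theorem at all: it is quoted verbatim from \cite[2.8]{DL1} and introduced with ``To make the paper self contained we cite multiplier's representation theorem\ldots''. So there is no proof in the paper to compare your proposal against.

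That said, your sketch is a reasonable reconstruction of how the result is obtained in \cite{DL1}, and in fact the paper later quotes the key ingredient as Theorem~\ref{B}: the map $\B\colon\Ar'_b\to M(\R)$, $\B(F)g(y)=\langle g(y\cdot),F\rangle$, is a topological isomorphism with multiplier sequence given by the moments $(\langle z^n,F\rangle)$. Your inverse $M\mapsto\delta_1\circ M$ is exactly $\B^{-1}$ (since $(\B(F)g)(1)=\langle g,F\rangle$), and composing with the K\"othe--Grothendieck/Cauchy-transform duality $\Ar'_b\cong\Ho$ is precisely the route taken in \cite{DL1}. The one place where your organisation diverges is step~(a): rather than building $M_f$ directly via a Hadamard contour integral and deforming the contour by hand, the source first establishes the abstract isomorphism $\B$ and then transports the topology through the known duality, which sidesteps the uniform-in-$x$ contour estimates you flag as the crux. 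Your direct approach would also work, but the bookkeeping for the contour deformation (ensuring $x/\zeta\in U$ for all real $x$ while $\zeta$ avoids the real half-lines where $m$ is singular) is genuinely delicate and is exactly what the $\B$-first route is designed to avoid.
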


\section{Main results}

Now we present the theorem which will be our main tool in proving that some multipliers do or do not generate $C_0$-semigroups.

\begin{theorem}\label{Ttzwarty}
Let $\map{M}{\A(\R)}{\A(\R)}$ be a multiplier with the multiplier sequence $(m_n)_{n\in\N}$. The following assertions are equivalent:
\begin{enumerate}[$(i)$]
 \item The multiplier $M$ generates a $C_0$-semigroup $\Tt$.
 \item For every $t\in \R_+$ the operator $T_t$ is a multiplier with the multiplier sequence $(m^t_n)_{n\in\N}=(\exp(tm_n))_{n\in\N}$ and the map $\map{Tf}{\R_+}{\Ar}$, $Tf(t)=T_tf$ is continuous for every $f\in\Ar$.
 \item For every $t\in \R_+$ the operator $T_t$ is a multiplier with the multiplier sequence $(m^t_n)_{n\in\N}=(\exp(tm_n))_{n\in\N}$ and the set $\{ T_tf:\; t\in [0,t_0]\}$ is bounded in $\A(\R)$ for every $f\in \Ar$ and every $t_0\geq 0$.
\end{enumerate}

%
\end{theorem}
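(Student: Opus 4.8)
The plan is to prove the cycle $(i)\Rightarrow(ii)\Rightarrow(iii)\Rightarrow(i)$, since $(i)\Rightarrow(ii)$ is essentially the content of the corollary to Lemma~\ref{fakt-mnoznikiTt}, and $(ii)\Rightarrow(iii)$ is soft functional analysis. For $(i)\Rightarrow(ii)$: if $M$ generates a $C_0$-semigroup $\Tt$, then each monomial $x^n$ is an eigenvector of $M$ with eigenvalue $m_n$, so by Lemma~\ref{fakt-mnoznikiTt} we get $T_t(x^n)=e^{tm_n}x^n$; hence $T_t$ is a multiplier with multiplier sequence $(\exp(tm_n))$. Continuity of $t\mapsto T_tf$ on $\R_+$ follows from strong continuity of the semigroup (property $(3)$, which gives continuity at every $s\ge 0$, using the semigroup law to reduce left/right limits to continuity at $0$). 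For $(ii)\Rightarrow(iii)$: a continuous map from the compact interval $[0,t_0]$ into the locally convex space $\Ar$ has relatively compact, hence bounded, image; so $\{T_tf:\ t\in[0,t_0]\}$ is bounded in $\Ar$ for every $f$ and every $t_0$.

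The substantive implication is $(iii)\Rightarrow(i)$: from the pointwise recipe $T_t(x^n)=\exp(tm_n)x^n$ together with the boundedness condition, one must build a genuine $C_0$-semigroup and verify that its generator is exactly $M$ (with the right domain). I would first check that the formula $T_tf:=\sum_n \exp(tm_n)f_n x^n$, defined initially on monomials and extended by linearity to the dense subspace of polynomials, actually extends to a continuous operator on all of $\Ar$. This is where $(iii)$ does the work: boundedness of $\{T_tf:\ t\in[0,t_0]\}$ for polynomials $f$, combined with the closed graph / Banach–Steinhaus machinery available on $\Ar$ (it is a complete, ultrabornological, even PLS-type space, so such theorems apply), should force $T_t$ to be well-defined and continuous on $\Ar$; alternatively one invokes Theorem~\ref{DL1} to say that the map $t\mapsto T_t$ lands in $M(\R)$ by identifying the symbol $\sum_n \exp(tm_n) z^n$ as an element of $H(\hat\C\setminus\frac1\R)$ for each fixed $t$. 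The algebraic semigroup laws $T_tT_s=T_{t+s}$ and $T_0=I$ are then immediate on monomials and pass to $\Ar$ by density and continuity.

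The remaining and most delicate point is strong continuity, i.e.\ property $(3)$: we must show $T_tf\to T_sf$ in $\Ar$ as $t\to s$ for every $f\in\Ar$. By the semigroup law it suffices to prove continuity at $t=0$, i.e.\ $T_tf\to f$. On polynomials this is trivial; the issue is uniformity, i.e.\ passing to the closure of the polynomials in the strong inductive-limit topology. Here the boundedness hypothesis $(iii)$ is again the key: the family $\{T_t:\ t\in[0,1]\}$ is equicontinuous (by a Banach–Steinhaus argument on $\Ar$, using that $\{T_tf:\ t\in[0,1]\}$ is bounded for each $f$), and an equicontinuous family that converges to the identity on a dense set converges to the identity pointwise on the whole space; this is the standard $3\varepsilon$ argument, which is valid in any locally convex space once equicontinuity is in hand. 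Finally I would identify the generator: let $(A,D(A))$ be the generator of the semigroup just constructed; differentiating $T_t(x^n)=e^{tm_n}x^n$ at $t=0$ shows every monomial lies in $D(A)$ with $A(x^n)=m_nx^n$, so $A$ agrees with $M$ on the dense subspace of polynomials; since $M$ is continuous on $\Ar$ and $A$ is closed, and since the generator of a $C_0$-semigroup is uniquely determined, one concludes $A=M$ on all of $\Ar$ (so in particular $D(A)=\Ar$, reflecting that $M$ is everywhere-defined and continuous). The main obstacle I anticipate is the passage from pointwise-on-monomials statements to statements on all of $\Ar$ in the correct topology — both for well-definedness/continuity of $T_t$ and for strong continuity — which forces careful use of the Banach–Steinhaus and closed-graph theorems in the category of PLS-spaces together with Fact~1's description of convergent sequences.
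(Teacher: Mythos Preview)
Your cycle $(i)\Rightarrow(ii)\Rightarrow(iii)\Rightarrow(i)$ is correct, and the identification of the generator at the end matches the paper's argument (closedness of $A$ plus continuity of $M$ plus density of polynomials). Two remarks are in order.

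First, a simplification: the paragraph in which you worry about extending $T_t$ from polynomials to all of $\Ar$ is unnecessary. Hypothesis $(iii)$ already asserts that each $T_t$ \emph{is} a multiplier, hence by definition an element of $L(\Ar)$; no closed-graph or Banach--Steinhaus work is needed for well-definedness or continuity of the individual operators. The paper simply takes this as given and moves directly to the semigroup law (checked on monomials, passed to $\Ar$ by density) and then to strong continuity.

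Second, your route to strong continuity is genuinely different from the paper's. You use barrelledness of $\Ar$ to pass from the pointwise boundedness in $(iii)$ to equicontinuity of $\{T_t:\ t\in[0,1]\}$, and then run the standard $3\varepsilon$ argument through the dense polynomials; this is correct and is the textbook semigroup-theoretic device. The paper instead exploits the Montel property of $\Ar$: bounded sets are relatively compact, so the orbit $\{T_tf:\ t\in[0,t_0]\}$ lies in a compact set. It then introduces a strictly coarser Hausdorff topology $\tau_2$ on $\Ar$, pulled back from the product topology on $\omega$ via the Taylor-coefficient map $f\mapsto(f^{(n)}(0)/n!)_n$, in which $t\mapsto T_tf$ is trivially continuous (coordinatewise it is $t\mapsto e^{tm_n}f^{(n)}(0)/n!$). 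Since a compact Hausdorff topology is minimal among Hausdorff topologies, $\tau$ and $\tau_2$ agree on the orbit, and continuity in $\tau_2$ upgrades for free to continuity in $\tau$. Your argument is more portable (it needs only barrelledness); the paper's avoids the equicontinuity/density bookkeeping entirely by a purely topological trick, at the cost of invoking the deeper fact that bounded subsets of $\Ar$ are compact.
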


\begin{proof}
$(i)\Rightarrow(ii)$: 
Follows from Fact \ref{fakt-mnoznikiTt}.

$(ii)\Rightarrow(iii)$: Obvious.

$(iii)\Rightarrow (i)$:
First we will show that multipliers $(T_t,(m^t_n))$ form a semigroup. For every $t,s \geq 0$ and every monomial $x^n$ we have
\begin{align*}
T_tT_s x^n=T_t e^{sm_n}x^n=e^{(t+s)m_n}x^n= T_{t+s}x^n.
\end{align*}
Since polynomials are dense in $\Ar$ we get that $T_tT_s = T_{t+s}$ for every $t,s\geq 0$ and $\Tt$ is indeed a semigroup.

Now we will show that $\Tt$ is a $C_0$-semigroup. We assume that the set $\{ T_tf:\; t\in [0,t_0]\}$ is bounded in $\Ar$ for arbitrary $f\in\Ar$, $t_0\geq 0$. By $\tau$ we denote the natural topology on $\Ar$.

Recall that an operator $V:\Ar=\indH \rightarrow \C$ is continuous if and only if $\map{V}{H(U)}{\C}$ is continuous for every complex neighbourhood $U\supset \R$ (\cite[1.25]{Dnotes}).
The linear map
\begin{align*}
 B:\Ar &\longrightarrow \omega \\
 f &\longmapsto\left(\frac{f^{(n)}(0)}{n!}\right)_n,
\end{align*}
is continuous since for the topology of pointwise convergence $\tau_\omega$ on $\omega$ and from the Cauchy inequality we get 
\begin{align*}
 \abs{\frac{f^{(n)}(0)}{n!}} \leq C_K \norm{f}_{\infty, K}
\end{align*}
for any compact set $K\subset U$ with $0\in \Int K$. Hence we can consider $\Ar$ with the coarser topology induced by the map above i.e. $\tau_2=B^{-1}(\tau_\omega)$.

The multiplier sequence of $T_t$ equals $(e^{tm_n})_{n\in\N}$. Hence $(T_tf)^{(n)}(0)=e^{tm_n}f^{(n)}(0)$ and the map 
\begin{align*}
C_f: \R_{+} &\longrightarrow \omega \\
 t &\longmapsto \left(\frac{(T_tf)^{(n)}(0)}{n!}\right)_n= \left(\frac{e^{tm_n} f^{(n)}(0)}{n!}\right)_n
\end{align*}
is continuous. 

We consider the mapping $\map{Tf}{\R_+}{(\Ar, \tau)}$, $Tf(t):=T_tf$. The map $\map{Tf}{\R_+}({\Ar, B^{-1}(\tau_\omega)})$ is continuous. Indeed, take an open set $U\in B^{-1}(\tau_\omega)$. Hence, there exists an open set $V\in\omega$ such that $U=B^{-1}(V)$ and we have $(Tf)^{-1}(U)=(Tf)^{-1}(B^{-1}(V))=(B\circ Tf)^{-1}(V)=C_f^{-1}(V)$.

Since by the assumption the set $\{T_tf: \; t\in[0,t_0]\}$ is bounded in $(\Ar, \tau)$, hence compact and the compact Hausdorff topology is the minimal Hausdorff topology \cite[3.1.14]{Engelking} we get that $\tau=\tau_2$ on $\{T_tf: t\in[0,t_0]\}$ and the map $\map{Tf}{[0,t_0]}{(\Ar, \tau)}$ is continuous for every $t_0\geq 0$. Hence $\Tt$ is strongly continuous.

Denote by $A$ the generator of the semigroup $\Tt$. For every monomial $x^n$ we have
\begin{align*}
Ax^n=\lim_{t\searrow 0}\frac{T_tx^n-x^n}{t}=\lim_{t\searrow 0} \frac{e^{tm_n}x^n-x^n}{t}
= \lim_{t\searrow 0} \frac{e^{tm_n}-1}{t}x^n=m_nx^n.
\end{align*} 
Hence, $A=M$ on the set of polynomials, which is dense in $\Ar$. As the operator $M$ is continuous, for any function $f\in \Ar$ and a sequence of polynomials $p_n$ converging to $f$, we have  $Ap_n=Mp_n\rightarrow Mf$ in $\Ar$. Because the generator $A$ is closed \cite[1.4]{Komura} we get that $f\in D(A)$ and $Af=Mf$.

\end{proof}

The above with Theorem \ref{DL1} gives

\begin{corollary} \label{wniosek}
The following assertions are equivalent
\begin{enumerate}[$(1)$]
	\item	
	The multiplier $(M,(m_n))$ generates a $C_0$-semigroup $\Tt$ on $\Ar$
	\item
	For every $t\geq 0$ the function $f_t$, $f_t(z)=\sum_{n=0}^\infty \exp(tm_n)z^n$, extends to a holomorphic function 
belonging to $H(\hat\C\setminus \frac{1}{\R})$ and the set $\{f_t: t \leq t_0\}$ is bounded in $ H(\hat\C\setminus \frac{1}{\R})$ for all $t_0\geq 0$.
	\item 
	For every $t\geq 0$ the function $\widetilde{f}_t$, $\widetilde{f}_t(z)=\sum_{n=0}^\infty \frac{\exp(tm_n)}{z^{n+1}}$, extends to a holomorphic function 
belonging to $\Ho$ and the set $\{\widetilde{f}_t: t \leq t_0\}$ is bounded in $\Ho$ for all $t_0\geq 0$.
\end{enumerate}

\end{corollary}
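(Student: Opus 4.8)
The plan is to deduce Corollary~\ref{wniosek} directly from Theorems~\ref{Ttzwarty} and~\ref{DL1}: apart from one point about boundedness, it is a matter of transporting the conditions of Theorem~\ref{Ttzwarty} along the algebra isomorphisms recorded above.

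First I would fix notation for those isomorphisms. Let $\Psi\colon M(\R)\to H(\hat\C\setminus\frac{1}{\R})$ be the topological algebra isomorphism of Theorem~\ref{DL1}, so that a multiplier with sequence $(a_n)$ is sent to the function with Taylor coefficients $(a_n)$ at $0$, i.e. $z\mapsto\sum_{n=0}^\infty a_n z^n$; and recall the isomorphism $\phi\colon\Ho\to H(\hat\C\setminus\frac{1}{\R})$, $\phi(g)(z)=\frac{1}{z}g(\frac{1}{z})$, which sends $\sum_{n=0}^\infty a_n z^{-n-1}$ to $\sum_{n=0}^\infty a_n z^n$. Thus, for the operator $T_t$ determined by the multiplier sequence $(\exp(tm_n))$ one has $\Psi(T_t)=f_t$ and $\phi(\widetilde{f}_t)=f_t$, whenever these objects are defined.

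Next I would run the equivalence $(i)\Leftrightarrow(iii)$ of Theorem~\ref{Ttzwarty} through these identifications. By Theorem~\ref{DL1} applied to the sequence $(\exp(tm_n))$, the clause ``for every $t$, $T_t$ is a multiplier with sequence $(\exp(tm_n))$'' says precisely that for each $t$ the power series $\sum_{n=0}^\infty\exp(tm_n)z^n$ has positive radius of convergence and extends to an element $f_t\in H(\hat\C\setminus\frac{1}{\R})$ (equivalently, that $\widetilde{f}_t$ extends to an element of $\Ho$), with $\Psi(T_t)=f_t$. As for the boundedness clause, $\Psi$, being a topological isomorphism, maps bounded sets onto bounded sets in both directions, so $\{f_t:t\le t_0\}$ is bounded in $H(\hat\C\setminus\frac{1}{\R})$ if and only if $\{T_t:t\le t_0\}$ is bounded in $M(\R)$, i.e. in $L(\Ar)$ endowed with the topology of uniform convergence on bounded sets. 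Since $\Ar$ is barrelled (in fact ultrabornological), the Banach--Steinhaus theorem shows that a family of continuous operators on $\Ar$ is bounded in $L(\Ar)$ exactly when it is pointwise bounded; hence $\{T_t:t\le t_0\}$ is bounded in $M(\R)$ if and only if $\{T_tf:t\in[0,t_0]\}$ is bounded in $\Ar$ for every $f\in\Ar$. Combining the two translations turns condition $(iii)$ of Theorem~\ref{Ttzwarty} into condition $(2)$, which gives $(1)\Leftrightarrow(2)$. Finally $(2)\Leftrightarrow(3)$ is immediate from $\phi$: it is a topological algebra isomorphism with $\phi(\widetilde{f}_t)=f_t$, so it carries the statement $\widetilde{f}_t\in\Ho$ to $f_t\in H(\hat\C\setminus\frac{1}{\R})$ and bounded families in $\Ho$ to bounded families in $H(\hat\C\setminus\frac{1}{\R})$, and conversely.

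The only step that is not pure bookkeeping is the equivalence, inside the boundedness clause, between boundedness of the orbit $\{T_tf\}$ for each fixed $f$ and boundedness of $\{T_t\}$ in $L(\Ar)$; this is where barrelledness of $\Ar$ enters, and I would isolate it as a one-line remark with a reference to the standard structure theory of the space of real analytic functions. The rest of the argument only uses Theorem~\ref{DL1} and the explicit action of $\Psi$ and $\phi$ on the relevant Taylor/Laurent series.
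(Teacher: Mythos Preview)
Your proposal is correct and follows essentially the same route as the paper: both arguments combine condition~$(iii)$ of Theorem~\ref{Ttzwarty} with the topological algebra isomorphism of Theorem~\ref{DL1}, and both invoke the uniform boundedness principle (you phrase it via barrelledness of $\Ar$) to pass between pointwise boundedness of the orbits $\{T_tf\}$ and boundedness of $\{T_t\}$ in $L(\Ar)$. The only cosmetic difference is that you derive $(2)\Leftrightarrow(3)$ directly from the isomorphism $\phi$, whereas the paper proves $(1)\Leftrightarrow(3)$ by repeating the argument for $(1)\Leftrightarrow(2)$.
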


\begin{proof}
$(1)\Leftrightarrow(2)$: 
By Theorem \ref{Ttzwarty} statement $(1)$ is equivalent to $T_t$ being multipliers with multiplier sequences $(e^{tm_n})_{n\in\N}$ and $\{T_tf: t\leq t_0\}$ being bounded in $\Ar$ for all $t_0>0$ and all $f\in\Ar$. The first condition by Theorem \ref{DL1} is equivalent to $f_t\in H(\hat\C\setminus \frac{1}{\R})$ for all $t\geq 0$. In view of the uniform boundness principle the second condition is equivalent to $\{T_t: t\leq t_0\}$ being bounded in $\LL(\Ar)$, which by Theorem \ref{DL1} is equivalent to $\{f_t: t\leq t_0\}$ being bounded in $H(\hat\C\setminus \frac{1}{\R})$. 

$(1)\Leftrightarrow(3)$: the proof of the equivalence is similar to the above.
\end{proof}

\begin{lemma}\label{additive}
 The set of multipliers generating a $C_0$-semigroup is additive.
\end{lemma}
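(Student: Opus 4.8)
The plan is to reduce the statement to a boundedness condition using Corollary \ref{wniosek} and then exploit the multiplicative structure of $\Ho$ (equivalently $H(\hat\C\setminus\frac1\R)$). Suppose $(M,(m_n))$ and $(N,(k_n))$ both generate $C_0$-semigroups; I want to show that $(M+N,(m_n+k_n))$ does too. By Corollary \ref{wniosek}, it suffices to check that for every $t\geq 0$ the function
\[
g_t(z)=\sum_{n=0}^\infty \exp\bigl(t(m_n+k_n)\bigr)z^n = \sum_{n=0}^\infty \exp(tm_n)\exp(tk_n)\,z^n
\]
belongs to $H(\hat\C\setminus\frac1\R)$, and that $\{g_t : t\leq t_0\}$ is bounded there for every $t_0\geq 0$. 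The key observation is that $g_t$ is precisely the Hadamard product $f_t * h_t$, where $f_t(z)=\sum \exp(tm_n)z^n$ and $h_t(z)=\sum \exp(tk_n)z^n$ are the functions associated with the two given semigroups, which by hypothesis lie in $H(\hat\C\setminus\frac1\R)$.

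First I would invoke Corollary \ref{wniosek} for each of $M$ and $N$ to obtain, for every $t\geq 0$, functions $f_t,h_t\in H(\hat\C\setminus\frac1\R)$ with Taylor coefficients $\exp(tm_n)$ and $\exp(tk_n)$ respectively, together with the boundedness of the families $\{f_t:t\leq t_0\}$ and $\{h_t:t\leq t_0\}$. Next, since $H(\hat\C\setminus\frac1\R)$ equipped with Hadamard multiplication is an algebra (stated in the excerpt, and isomorphic to $M(\R)$ by Theorem \ref{DL1}), the product $g_t:=f_t*h_t$ again belongs to $H(\hat\C\setminus\frac1\R)$, and its Taylor coefficients are exactly $\exp(tm_n)\exp(tk_n)=\exp(t(m_n+k_n))$. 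This already gives the first half of condition $(2)$ in Corollary \ref{wniosek} for the sequence $(m_n+k_n)$. Equivalently, one may phrase the whole argument on the operator side: $T_t^{M+N}=T_t^M T_t^N$ as multipliers, since they have the same multiplier sequence, so it is the composition of the two given semigroups.

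The remaining point, and the one requiring a little care, is the boundedness of $\{g_t:t\leq t_0\}$ in $H(\hat\C\setminus\frac1\R)$. I would argue this via the operator picture: by Theorem \ref{DL1} the map $f_t\mapsto f_t*\,\cdot\,$ corresponds to the multiplier operator $T_t^M\in M(\R)\subset L(\Ar)$, and boundedness of $\{f_t:t\leq t_0\}$ in $H(\hat\C\setminus\frac1\R)$ is equivalent (as used in the proof of Corollary \ref{wniosek}) to equicontinuity of $\{T_t^M:t\leq t_0\}$ on $\Ar$; likewise for $\{T_t^N\}$. The composition $T_t^{M+N}=T_t^M\circ T_t^N$ of two equicontinuous families of operators is again equicontinuous: given a continuous seminorm $p$ on $\Ar$, equicontinuity of $\{T_t^M\}$ gives a seminorm $q$ with $p(T_t^M g)\leq q(g)$ for all $t\leq t_0$, and equicontinuity of $\{T_t^N\}$ gives a seminorm $r$ with $q(T_t^N g)\leq r(g)$, whence $p(T_t^{M+N}g)\leq r(g)$ uniformly in $t\leq t_0$. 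Translating back through Theorem \ref{DL1} yields boundedness of $\{g_t:t\leq t_0\}$ in $H(\hat\C\setminus\frac1\R)$, and Corollary \ref{wniosek} then gives that $M+N$ generates a $C_0$-semigroup.

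The main obstacle I anticipate is handling the boundedness/equicontinuity bookkeeping cleanly — in particular making sure that the bounds are genuinely uniform over the compact parameter interval $[0,t_0]$ and that the passage between "bounded family in $H(\hat\C\setminus\frac1\R)$" and "equicontinuous family of operators" is correctly justified via the uniform boundedness principle, exactly as invoked in the proof of Corollary \ref{wniosek}. Once that correspondence is in hand, the additivity is essentially the trivial observation that $\exp(t(m_n+k_n))=\exp(tm_n)\exp(tk_n)$, i.e. that addition of generators corresponds to (pointwise) multiplication of semigroup multiplier sequences, which is the composition of commuting multiplier semigroups.
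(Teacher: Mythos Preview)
Your proof is correct and follows essentially the same route as the paper's: identify $T_t^{M+N}$ with the composition (equivalently, Hadamard product) $T_t^M T_t^N$, and then verify the hypotheses of Theorem~\ref{Ttzwarty}. The only cosmetic difference is that the paper checks condition $(ii)$ of Theorem~\ref{Ttzwarty} directly (strong continuity of $t\mapsto T_t^{A}T_t^{B}f$ from that of the two factors), whereas you go through condition $(iii)$ via Corollary~\ref{wniosek} and the equicontinuity of composed families; both arguments are valid.
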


\begin{proof}
 Let multipliers $(A, (a_n))$, $(B,(b_n))$ be the generators of $C_0$-semigroups $(T_t^A,(e^{ta_n}))_{t\geq 0}$ and $(T^B_t,(e^{tb_n}))_{t\geq 0}$ respectively and let $f_t,g_t\in H(\hat\C\setminus \frac{1}{\R})$ be the corresponding (in view of Theorem \ref{DL1}) holomorphic functions. Take $t\geq 0$ and choose $0<\eps,\delta<1$ such that $f_t\in H( \hat\C\setminus ((-\infty, -\eps]\cup [\eps,\infty)))$ and $g_t\in H( \hat\C\setminus ((-\infty, -\delta]\cup [\delta,\infty)))$. By the Hadamard multiplication theorem $f_t\ast g_t\in H( \hat\C\setminus ((-\infty, -\eps\delta]\cup [\eps\delta,\infty)))$ \cite[Th. H]{Muller}. Hence by Theorem \ref{DL1} the operator $T^{A+B}_t$ is a multiplier with a mutliplier sequence $(e^{t(a_n+b_n)})_{n\geq 0}$. Since for monomials we have $T_t^{A+B}x^n=e^{t(a_n+b_n)}x^n=T_t^AT_t^Bx^n$ and momomials are linearly dense in $\Ar$, we get that $T_t^{A+B}=T_t^AT_T^B$. Hence the map $\map{T^{A+B}f}{\R_+}{\Ar}$, $T^{A+B}f(t)=T_t^{A+B}f$ is continuous for all $f\in\Ar$. Thus by Theorem \ref{Ttzwarty} the multiplier $(A+B, (a_n+b_n))$ generates a $C_0$-semigroup $(T_t^{A+B})_{t\geq 0}$.

 \end{proof}

Now we answer the question when does the Euler differential operator generate a strongly continuous semigroup.

\begin{theorem}\label{euler1}
 Let $E\in L(\Ar)$ be a first order Euler differential operator, 
 \[
  Ef(x)=axf'(x)+bf(x).
 \]
The multiplier $E$ generates a $C_0$-semigroup if and only if $a\in\R$.
\end{theorem}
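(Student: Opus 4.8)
The plan is to apply Corollary~\ref{wniosek} directly: the multiplier sequence of $E$ is $m_n = an+b$, so the candidate semigroup has multiplier sequence $e^{tm_n} = e^{tb}(e^{ta})^n$, and the associated function (in the Taylor picture) is
\[
 f_t(z) = \sum_{n=0}^\infty e^{tb}(e^{ta})^n z^n = \frac{e^{tb}}{1 - e^{ta}z},
\]
a geometric series. I would first dispose of the easy direction. If $a \in \R$, then $e^{ta} > 0$, so for each $t \geq 0$ the function $f_t$ is rational with a single pole at $z = e^{-ta} \in \R \setminus \{0\} = \tfrac{1}{\R}$, hence $f_t \in H(\hat\C \setminus \tfrac1\R)$. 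For boundedness of $\{f_t : t \leq t_0\}$ in $H(\hat\C\setminus\tfrac1\R)$, note that as $t$ ranges over $[0,t_0]$ the poles $e^{-ta}$ stay in a fixed compact subinterval of $\R\setminus\{0\}$, so all the $f_t$ are holomorphic on one fixed neighbourhood $U$ of $\tfrac1\R$ and are uniformly bounded on compact subsets of $U$ (the constant $e^{tb}$ is bounded on $[0,t_0]$ and $\tfrac{1}{1-e^{ta}z}$ is bounded on compacta avoiding the relevant compact set of poles). By Corollary~\ref{wniosek}, $E$ generates a $C_0$-semigroup.

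For the converse, suppose $a \notin \R$, write $a = \alpha + i\beta$ with $\beta \neq 0$. Then for $t > 0$ the pole of $f_t$ is at $z_t = e^{-ta} = e^{-t\alpha}e^{-it\beta}$, which lies off the real axis precisely because $t\beta \notin \pi\Z$; so for all but a discrete set of $t$ we have $z_t \notin \tfrac1\R$, and in particular $f_t \notin H(\hat\C\setminus\tfrac1\R)$ for such $t$. Hence condition $(2)$ of Corollary~\ref{wniosek} already fails — indeed the very first requirement, that $f_t$ extend to an element of $H(\hat\C\setminus\tfrac1\R)$, is violated for arbitrarily small $t>0$ — and therefore $E$ does not generate a $C_0$-semigroup. (One should also address the degenerate sub-case $a \in i\pi\Q$ type resonances, but even there the pole $z_t$ is real only for $t$ in a discrete set, so non-generation still follows.)

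The main obstacle, such as it is, is purely bookkeeping on the locally convex side: verifying that the geometric-series functions genuinely lie in $H(\hat\C\setminus\tfrac1\R)$ (i.e.\ that they extend holomorphically past $0$ and to all of $\C\setminus\R$ — here trivial since they are rational) and that uniform boundedness on $[0,t_0]$ in the correct inductive-limit sense reduces to uniform bounds on compacta of a single neighbourhood. The role of the parameter $b$ is negligible: it contributes only the scalar factor $e^{tb}$, which is locally bounded in $t$ and never affects the location of the singularity, so the dichotomy is governed entirely by whether $a$ is real. I expect the write-up to consist of the explicit geometric sum, the observation about where the pole sits, and two short appeals to Corollary~\ref{wniosek}.
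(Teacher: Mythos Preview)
Your argument is correct, and for the necessity direction ($a\notin\R$) it matches the paper's exactly: both locate the pole of the geometric series at $e^{-ta}$ and observe that it leaves the real axis for all but a discrete set of $t$.

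For the sufficiency direction you take a genuinely different route. The paper first reduces to $b=0$ via Lemma~\ref{additive}, then identifies the candidate semigroup explicitly as the dilation $T_tf(x)=f(e^{ta}x)$ and verifies strong continuity by hand in $\Ar$: given $f\in H(U)$, it shrinks $U$ to a star-convex $V$ so that $e^{ta}V\subset U$ for small $t$, and checks $f(e^{t_na}\cdot)\to f$ uniformly on compacta. In other words, the paper works through condition~(ii) of Theorem~\ref{Ttzwarty} at the level of $\Ar$ itself. You instead stay entirely on the representation side and invoke Corollary~\ref{wniosek}: the poles $e^{-ta}$, $t\in[0,t_0]$, remain in a fixed compact subinterval of $(0,\infty)$, so all $f_t$ lie in one step of the inductive limit defining $H(\hat\C\setminus\tfrac{1}{\R})$ and are uniformly bounded there. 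This is shorter and makes more systematic use of the machinery already set up; the paper's approach has the compensating virtue of exhibiting the semigroup concretely as the dilation family. Two small points of phrasing: your ``fixed neighbourhood $U$ of $\tfrac{1}{\R}$'' should really be ``fixed step of the inductive limit'' (i.e.\ holomorphic on $\hat\C$ minus a fixed compact real interval containing all the poles), and the regularity of this inductive limit---that bounded sets sit in a single step---is what makes your reduction to compacta legitimate; this is implicit in the paper's proof of Corollary~\ref{wniosek} via the topological isomorphism of Theorem~\ref{DL1}.
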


\begin{proof}
A multiplier $(M,(c))$ with a constant multiplier sequence generates the $C_0$-semigroup $(T_t)_{t\geq 0}$, $T_tf=e^{ct}f$. Hence by Lemma \ref{additive} without loss of generality we can assume that $b=0$.  

The multipier sequence of $E$ is $(m_n)=(an)$. Hence we get the corresponding functions
\begin{align}\label{deg1}
 {f}_t(z)=\sum_{n=0}^\infty e^{tan}{z^{n}}= \frac{1}{1-ze^{ta}} \in H_0(\widehat\C\setminus e^{-ta}).
\end{align}

Hence ${f}_t\in H(\hat\C\setminus \frac{1}{\R})$ for every $a \in \R$, $t\geq 0$, and $(T_t,(e^{tan})) $ is a multiplier. On the other hand, if $a\notin \R$ then for every $t$ such that $ta\neq k\pi i$, $k\in\Z$, we have ${f}_t\notin H(\hat\C\setminus \frac{1}{\R})$ and $E$ does not generate a semigroup.

To finish the proof we need to show that, under the assumption $a\in\R$, the semigroup $\Tt$ is strongly continuous, i.e. we need to prove the continuity of the map $\map{Tf}{\R_+}{\A(\R)}$, $Tf(t)=T_f(t)$ for arbitrary $f\in\A(\R)$.
By (\ref{deg1}) we can extend the map $\map{Tf}{\R_+}{\A(\R)}$ to the map $\map{Tf}{\R}{\A(\R)}$.

To prove the continuity we will use the explicit formula of the multipliers $T_t$ with $(m_n^t)=(\exp(tan))$. We have $T_tf(x)=f(e^{ta}x)$.
Indeed, for a monomial $x^n$ we have 
\begin{align*}
T_t x^n(y)= e^{tan} x^n(y)= e^{tan}y^n=x^n(e^{ta}y) .
\end{align*}
Moreover, observe that the map $f\mapsto g$, $g(x)=f(e^{ta}x)$ is linear and continuous on $\Ar$ for any $a,t\in\R$.
Thus the claim follows from the density of polynomials in $\Ar$.

As $T_tf-T_{t+s}f=T_t(f-T_sf)$ and $s\in\R$ it is enough to show the continuity at $t=0$.
Recall that $T_{t_n}f \rightarrow f$ in $\Ar$ as $t_n\rightarrow 0$ if and only if there exists an open complex neighbourhood $U\supset\R$ such that $T_{t_n}f\in H(U)$ for every $n\in\N$ and $T_{t_n}f\rightarrow f$ in $H(U)$.

Let $U$ be a complex open neighbourhood of $\R$ such that $f\in H(U)$. Let $U'$ be a star-convex subset of $U$ and put $V:=\frac{1}{2}U'$. We choose $\eps>0$ such that $e^{\abs{a}\eps}<2$. Then for $\abs{t}<\eps$ we have that $e^{t a}V\subset U'\subset U$ and $T_tf\in H(V)$.  

Now we will show that $T_{t_n}\rightarrow f$ in $H(V)$. Take an arbitrary compact set $K\subset V$. Then for a compact set $K_2$ such that $K\subset K_2 \subset V$, $K \subset \Int K_2$ and for ${t_n}$ small enough we have $e^{t_na}K\subset K_2\subset V$ and 
\begin{align*}
 \lim_{t_n\rightarrow 0}\norm{T_{t_n}f-f}_K= \lim_{t_n\rightarrow 0} \sup_{z\in K} \abs{f(e^{t_na}z)-f(z)}=0,
\end{align*}
since $f$ is uniformly continuous on compact sets.

We have proved that $\Tt$ is strongly continuous. Moreover $E$ is its generator as for all monomials we have
\[
 \lim_{t\rightarrow 0} \frac{T_tx^n-x^n}{t}=  \lim_{t\rightarrow 0} \frac{(e^{ta}x)^n-x^n}{t}= \lim_{t\rightarrow 0} \frac{e^{atn}-1}{t}x^n= anx^n=Ex^n.
\]

\end{proof}

Now we consider the differential operators $P(\theta)$ of higher orders. We start with the negative result.

\begin{theorem}
 Let $P(\theta)=\sum_{k=0}^Ka_k\theta^k$, $\theta f(x)=xf'(x)$, be a finite order differential operator of degree at least $2$. The operator 
 $P(\theta)$ does not generate a $C_0$-semigroup in the following cases:
 \begin{enumerate}[$(1)$]
 \item \label{N1}
 $\Rea a_K=\ldots =\Rea a_{l+1}=0$ and $\Rea a_l>0$ for some $l\geq 2$.
  \item \label{NiQ}
  $a_K,\ldots, a_2\in i\Q$.
 \end{enumerate}

\end{theorem}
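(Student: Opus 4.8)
The plan is to use Corollary \ref{wniosek}: the operator $P(\theta)$ generates a $C_0$-semigroup if and only if for every $t\ge 0$ the function $f_t(z)=\sum_{n=0}^\infty \exp(tP(n))z^n$ extends to a function in $H(\hat\C\setminus\frac1\R)$ and the relevant families are bounded. To prove non-generation it therefore suffices, for each of the two cases, to exhibit a single $t_0>0$ for which $f_{t_0}$ does \emph{not} extend holomorphically to $\hat\C\setminus\frac1\R$. Since $H(\hat\C\setminus\frac1\R)$ consists of functions holomorphic at $0$ that extend to $\C\setminus\R$, the power series $\sum_n \exp(t_0 P(n)) z^n$ must at least have a positive radius of convergence, and any singularity it carries must lie in $\frac1\R=\{1/x: x\in\R\setminus\{0\}\}\cup\{0\}$; equivalently, after the substitution $w=1/z$ the Laurent series $\sum_n \exp(t_0P(n)) w^{-n-1}$ (which is $\f f_{t_0}$) must extend to $\hat\C\setminus\R$, so all its singularities must be \emph{real}. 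The strategy in both cases is to show that the coefficient sequence $c_n:=\exp(t_0 P(n))$ forces a non-real singularity (or destroys the radius of convergence entirely).

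For case \eqref{N1}: write $P(n)=a_l n^l + (\text{lower order})$ with $\Rea a_l>0$ and $\Rea a_j=0$ for $j>l$; here $l\ge 2$. Then $|c_n|=\exp(t_0\Rea P(n))=\exp\bigl(t_0(\Rea a_l)\,n^l + o(n^l)\bigr)\to\infty$ super-exponentially fast, since $l\ge 2$. Hence $\limsup_n |c_n|^{1/n}=\infty$, so $\sum_n c_n z^n$ has radius of convergence $0$ for \emph{every} $t_0>0$, and $f_{t_0}$ is not holomorphic at $0$. Thus condition (2) of Corollary \ref{wniosek} already fails, and $P(\theta)$ cannot generate a $C_0$-semigroup. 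This case is short.

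For case \eqref{NiQ}: now $\Rea a_j=0$ for all $j\ge 2$ (so the growth issue of case \eqref{N1} does not arise and $|c_n|$ stays exponentially controlled because the real part of $P(n)$ grows at most linearly from the possibly-non-imaginary $a_1,a_0$). The idea is to locate a non-real singularity of $f_{t_0}$ by a suitable choice of $t_0$. Write $a_j=i q_j$ with $q_j\in\Q$ for $2\le j\le K$, and choose $t_0$ so that $t_0 q_j\in 2\pi\Z$ — e.g. $t_0=2\pi\,\mathrm{lcm}(\text{denominators})$ — so that the purely imaginary high-degree part contributes $\exp\bigl(t_0\sum_{j\ge2} i q_j n^j\bigr)=1$ for \emph{every} integer $n$. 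Then $c_n=\exp\bigl(t_0(a_1 n+a_0)\bigr)=e^{t_0 a_0}\,\zeta^n$ with $\zeta=e^{t_0 a_1}$, so $f_{t_0}(z)=e^{t_0a_0}\sum_n (\zeta z)^n = e^{t_0 a_0}/(1-\zeta z)$, which has its only singularity at $z=1/\zeta=e^{-t_0 a_1}$. If $a_1\notin\R$ then $e^{-t_0 a_1}\notin\R$ (for this $t_0$, after possibly adjusting $t_0$ by an integer multiple to avoid the measure-zero set where $t_0\Ima a_1\in\pi\Z$, which does not affect the $q_j$ condition), so $f_{t_0}\notin H(\hat\C\setminus\frac1\R)$ and we are done. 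The remaining subcase is $a_1\in\R$ (and $a_0$ arbitrary): then the degree of $P$ as a \emph{genuine} obstruction sits entirely in the $a_j$, $j\ge2$, and we cannot kill all of them simultaneously while seeing a singularity — so instead we should pick $t_0$ so that $t_0 q_K\notin 2\pi\Z$ but the resulting periodic-in-$n$ modulation $c_n=e^{t_0 a_0}\exp\bigl(i t_0(q_2 n^2+\dots+q_K n^K)\bigr)\zeta^n$, $|\zeta|=1$ (as $a_1\in\R$), is a bounded sequence that is \emph{not} the Taylor sequence of any function in $H(\hat\C\setminus\frac1\R)$. Here one argues that the lacunary/quasi-periodic oscillation of $\exp(i t_0 q_K n^K)$ with $K\ge 2$ produces a natural boundary on $|z|=1$, or at any rate a non-real singularity: for instance, using that the sequence $(t_0 q_K n^K \bmod 2\pi)$ is equidistributed (Weyl), the power series $\sum_n c_n z^n$ cannot continue past $|z|=1$ to a function holomorphic on $\C\setminus\R$, since the unit circle contains non-real points and an isolated-singularities-on-$\R$ structure is incompatible with a dense set of singular directions. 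Making this last step rigorous — identifying the precise obstruction when $a_1\in\R$ and all $a_j\in i\Q$ for $j\ge2$ — is the main difficulty, and I expect the paper handles it via a Fabry-gap / Weyl-equidistribution argument or by exhibiting an explicit non-real singular point through the structure of Gaussian-type exponential sums $\sum_n \exp(i\alpha n^2) z^n$.
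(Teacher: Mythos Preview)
Your treatment of case~(\ref{N1}) is correct and is exactly what the paper does: $\Rea P(n)\sim (\Rea a_l)n^l$ with $l\ge 2$ forces $\limsup_n|e^{tP(n)}|^{1/n}=\infty$, so $f_t$ is not even holomorphic at~$0$.

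Case~(\ref{NiQ}) has two genuine gaps.

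\emph{First gap.} In your subcase $a_1\notin\R$ you choose $t_0$ with $t_0q_j\in 2\pi\Z$ for all $j\ge 2$, obtaining $f_{t_0}(z)=e^{t_0a_0}/(1-\zeta z)$ with $\zeta=e^{t_0a_1}$, and then say one can pass to an integer multiple of $t_0$ to avoid $t_0\Ima a_1\in\pi\Z$. But the admissible $t_0$'s form a lattice $T\Z$ with $T\in\pi\Q$, and if $\Ima a_1\in\Q$ happens to satisfy $T\Ima a_1\in\pi\Z$, then \emph{every} integer multiple $mT$ also satisfies $mT\Ima a_1\in\pi\Z$, so $\zeta$ is real for all admissible $t_0$. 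This is not a measure-zero nuisance; it is exactly the case $a_1\in i\Q$ (after stripping off $\Rea a_1$), and it must be handled.

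\emph{Second gap.} In your subcase $a_1\in\R$ you conjecture a Fabry/Weyl natural-boundary argument. This is not what the paper does, and it is not needed. The paper's route is elementary:
\begin{enumerate}
\item Use Lemma~\ref{additive} together with Theorem~\ref{euler1} to subtract $(\Rea a_1)\theta + a_0$, reducing to $a_0=0$ and $a_1\in i\R$.
\item If $a_1=ir$ with $r\notin\Q$, your geometric-series trick works verbatim: take $t_0=2\pi S$ with $S$ a common denominator of $q_2,\dots,q_K$; then $e^{t_0P(n)}=e^{2\pi i r n}$ and $f_{t_0}(z)=1/(1-e^{2\pi i r}z)$ has a non-real pole.
\item If $a_1\in i\Q$ (this absorbs both your problematic $a_1\notin\R$ leftovers and your entire $a_1\in\R$ subcase after the reduction), write $P(n)=\frac{i}{S}\f P(n)$ with $\f P\in\Z[x]$, $\f P(0)=0$. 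Choose $n_0$ with $|\f P|$ strictly increasing from $n_0$ on and set $q=|\f P(n_0+2)|>2$, $t_0=S\pi/q$. Then $\xi_n:=e^{t_0P(n)}=\exp\!\big(\pi i\,\f P(n)/q\big)$ is \emph{periodic} in $n$ with period $2q$, so
\[
f_{t_0}(z)=\frac{\sum_{n=0}^{2q-1}\xi_n z^n}{1-z^{2q}}
\]
is rational with possible poles only at $2q$-th roots of unity. If $f_{t_0}\in H(\hat\C\setminus\tfrac1\R)$ the only poles allowed are $\pm1$, so $(1-z^2)f_{t_0}(z)=1+\xi_1 z+\sum_{n\ge 2}(\xi_n-\xi_{n-2})z^n$ would have to be entire. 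But by the choice of $n_0$ one has $\f P(n_0)\not\equiv\f P(n_0+2)\pmod{2q}$, hence $\xi_{n_0}\neq\xi_{n_0+2}$, and by periodicity $|\xi_n-\xi_{n-2}|$ does not tend to $0$; the radius of convergence is $1$, contradiction.
\end{enumerate}
So the missing idea is not equidistribution but the opposite: pick $t_0$ to make the coefficient sequence \emph{periodic}, turning $f_{t_0}$ into an explicit rational function whose poles you can locate by hand.
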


\begin{proof}
$(\ref{N1})$: The multiplier sequence of $P(\theta)$ is given by $(m_n)=(P(n))$. Assume that $P(\theta)$ generates a $C_0$-semigroup $\Tt$. Then, by Corollary \ref{wniosek}, for all $t\geq 0$ the operator $(T_t, e^{tP(n)})$ is a multiplier and the function $f_t$, $f_t(z)=\sum_{n=0}^\infty e^t{P(n)}z^n$ around $0$, extends to a holomorphic function in $H(\hat{\C}\setminus \R)$. But, for every $R>0$ we have
  \[
   \sup_{n\in\N} \abs{e^{tP(n)}}R^n=\sup_{n\in\N} e^{t\Rea P(n)}R^n> \sup_{n\in\N} e^{t(a_l-\eps)n^l}R^n =\infty
  \]
for some $\eps>0$.
  
$(\ref{NiQ})$: We start with the case $P(\theta)=\sum_{k=1}^K{ {a}_k \theta^k}$ such that $a_k\in iQ$ for every $1\leq k\leq K$. We will show, that for every such polynomial $P$ there exists $t_0\in \R_+$ such that $(m^{t_0}_n)_{n\in\N}=(\exp(t_0P(n)))_{n\in\N}$ is not a multiplier sequence.  

Let $\f{P}(x)=\sum_{k=1}^K{ \f{a}_k x^k}$ be a polynomial such that $\f{a}_k\in \Z$ for all $k\leq K$ and 
$m_n=\frac{i}{S}\f{P}(n)$, where $S$ is the common denominator of all the coefficients $a_k$.

As $\f{a_0}=0$ we have that $\f{P}(0)=0$. Let $n_0\in \N$ be such that 
\begin{enumerate}
 \item $\abs{\f{P}(n_0+2)}= q$,  $q>2$,   
 \item $\f{P}(n_0)\not\equiv \f{P}(n_0+2)$ mod $2q$.
\end{enumerate}
It is clear that such $n_0$ exists.  Indeed, take $n_0$ such that $P(n)$ is monotonous for $n\geq n_0$. Then $\abs{\f{P}(n_0)}<\abs{\f{P}(n_0+2)}<2q$.

Take $t_0= \frac{S\pi}{q}$ and consider the function 

\begin{align*}
f_{t_0}(z)=\sum_{n=0}^\infty {m^{t_0}_n} z^n = \sum_{n=0}^\infty \exp\left(\frac{\f{P}(n)}{q}\pi i\right) z^n \quad\; \text{around $0$.}
\end{align*}
 The expression $\exp\left(\frac{\f{P}(n)}{q}\pi i\right)$ takes at most $2q$ different values and
\begin{align*}
 \exp\left(\frac{\f{P}(n)}{q}\pi i\right)=\exp\left(\frac{\f{P}(2q+n)}{q}\pi i\right). 
\end{align*}
Denote $\xi_n=\exp\left(\frac{\f{P}(n)}{q}\pi i\right)$. Hence we have
\begin{align*}
 f_{t_0}(z)&= \sum_{n=0}^\infty \xi_n z^n = z^0+ \xi_1 z^1 + \xi_2 z^2 + \ldots + z^{2q} + \xi_1z^{2q+1} + \xi_2z^{2q+2}+\ldots  \\
 &=\frac{\sum_{n=0}^{2q-1} \xi_n z^n }{1-z^{2q}}
\end{align*}

This implies that $f$ is defined on $\C$ except it can have poles of order $1$ at $2q$-roots of unity.
Now we will show that $f_{t_0}\notin H(\C\setminus \frac{1}{\R})$. 
Assume that $f_{t_0} \in H(\C\setminus \frac{1}{\R})$, so $f_{t_0}$ would have only poles of order $1$ in points $\pm 1$. Then $g(z)=(1-z^2)f(z)\in H(\C)$. But
\begin{align*}
 g(z)&= (1-z^2)f_t(z)
 = (1-z^2)\sum_{n=0}^\infty \xi_n z^n
 = \sum_{n=0}^\infty (\xi_n z^n- \xi_n z^{n+2})\\
 &= 1 +\xi_1 z + \sum_{n=2}^\infty (\xi_n-\xi_{n-2})z^n.
\end{align*}
For every $k\in\N$ we have 
\begin{align*}
 \xi_{2kq+n_0+2}= \xi_{n_0+2}\neq \xi_{n_0} =\xi_{2kq+n_0}
\end{align*}
and 
\begin{align*}
 \abs{\xi_{2kq+n_0+2}-\xi_{2kq+n_0}}=\delta 
\end{align*}
for some $\delta>0$. 
Hence  
\begin{align*}
 \limsup_{n\rightarrow \infty}\sqrt[\leftroot{-3}\uproot{3}n]{\abs{\xi_n-\xi_{n-2}}}=1
\end{align*}
and we get a contradiction. Hence $f_{t_0}\notin H(\C\setminus \frac{1}{\R})$ and $(P(\theta),(P(n)))$ does not generate a semigroup. 
  
Now consider $P(\theta)= \sum_{k=1}^K a_k\theta^k$, with $a_K,\ldots, a_2\in i\Q$, $a_1=ir$, $r\in \R\setminus \Q$. Taking $t_0=2S\pi$, where $S$ denotes the common denominator of $a_K, \ldots a_2$, we get that 
\begin{align*}
 e^{t_0P(n)}= e^{2r\pi i n}.
\end{align*}
By Theorem \ref{DL1} the operator $(T_{t_0}, ( e^{2r\pi i n}))$ is not a multiplier since 
\begin{align*}
f_{t_0}(z)= \sum_{n=0}^\infty e^{2r\pi i n}z^n= \frac{1}{1-e^{2r\pi i}z} \notin H(\hat\C\setminus \frac{1}{\R})
\end{align*}
as $r\notin \Q$. By Theorem \ref{fakt-mnoznikiTt}, $(P(\theta),P(n))$ cannot generate a semigroup.

Summarizing, we proved that multiplier $(P(\theta),(P(n)))$ with $P(\theta)=\sum_{k=1}^K a_k\theta^k$, $a_K, \ldots a_2 \in i\Q$, $a_1\in \C\setminus\R$ does not generate a semigroup. Now take a multiplier $Q(\theta)= P(\theta)+b_1\theta +c$ with $b_1\in \R$. As the operators $(M_{-b}, (-b_1n-c))$, $(M_b, (b_1n+c))$ generate $C_0$-semigroups (Theorem \ref{euler1}) and the sum of multipliers being generators  is a generator (Lemma \ref{additive}) we conlude that $(Q(\theta), (Q(n)))$ generates the semigroup if and only if $(P(\theta), (P(n)))$ does, which finishes the proof.
\end{proof}


Now we will give another example of a multipier that generates a strongly continuous semigroup on $\Ar$, i.e. we will show that the Hardy operator, $Hf(x)=\frac{1}{x}\int_0^x f(t)dt$, is a generator of a $C_0$-group. To do this we need some more facts from the theory of the space of analytic functions. In particular, we  need a representation of multipiers by the so called Mellin functions. Hence, we start with following definitions.

\begin{definition}
Let $(\kappa_n)_{n\in\N}$, $(K_n)_{n\in\N}$ be sequences of real numbers such that $\kappa_1<0$ and $0<K_n\rightarrow \infty$.
We define an asymptotic halfplane $\omega$ by
\begin{align*}
\omega=\bigcup_{n=1}^\infty (\kappa_n+\omega_{K_n}) \; \text{for $\omega_{K_n}:=\{z\in\C: \abs{\Ima z}<K_n \Rea z\}$}. 
\end{align*}

%
We call a holomorphic function $f\in H(\omega)$ a Mellin function for the sequence $(m_n)_{n\in \N}$ if there exists a constant $C>0$ such that
\begin{align*}
 \abs{f(z)}\leq Ce^{C\abs{\Rea z}} \; \text{for $z\in\omega$}
\end{align*}
and
\begin{align*}
 f(n)=m_n.
\end{align*}
We will denote the space of Mellin functions by $\mH(\omega)$. 
\end{definition}

\begin{definition}
 For $a\in R$ we define
 \begin{align*}
  \mH_a(\omega)= \{f\in \mH(\omega): \forall j \sup_{z\in \Gamma_j}\abs{f(z)}e^{-(a+\frac{1}{j})\Rea z} <\infty\}
 \end{align*}
where $\Gamma_j=\overline{(\cup_{n\leq j}(\kappa_n+1/j+ \omega_{K_n})}$.
\end{definition}

The space $\mH_a(\omega)$ is a Fr\'echet space with the fundamental system of seminorms $(\norm{\cdot}_j)_{j\in \N}$ given by
\[
 \norm{f}_j=\sup_{z\in \Gamma_j} \abs{f(z)} e^{-(a+\frac{1}{j})\Rea z}.
\]

We will need the following theorems.
\begin{theorem}[{\cite[4.1]{DL}}]\label{H+}
 There exists a continuous, linear and surjective mapping $\map{H_a^+}{\mH_a(\omega)}{\A([0,e^a])'}$ satisfying
 \begin{align*}
  \langle H_a^+(f),x^n \rangle=f(n) \; \text{for every $n\in\N$}.
 \end{align*}
\end{theorem}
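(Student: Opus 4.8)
The plan is to establish the duality pairing $\langle H_a^+(f), x^n\rangle = f(n)$ by a density-and-continuity argument, reducing the identity to a statement about a dense subspace of $\mH_a(\omega)$ on which the pairing can be computed explicitly. First I would identify an appropriate dense subspace: the natural candidate is the span of exponential functions $z \mapsto e^{\lambda z}$ with $\lambda$ ranging over a suitable set (or, equivalently, functions arising from Laplace/Mellin transforms of measures supported on $[0, e^a]$), since these are exactly the functions whose associated functional on $\A([0,e^a])$ is transparently given by evaluation. For such an $f = e^{\lambda z}$ one has $f(n) = (e^\lambda)^n$, so the functional $x^n \mapsto f(n)$ is precisely point evaluation at $e^\lambda$, which lies in $\A([0,e^a])'$ provided $e^\lambda \in [0,e^a]$, i.e. $\lambda \le a$; this matches the growth condition $|f(z)| \le Ce^{C|\Rea z|}$ built into membership in $\mH_a(\omega)$.

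Next I would verify the three required properties of the map $H_a^+$ in turn. Linearity is immediate from the construction. For continuity, I would exhibit the functional $H_a^+(f)$ concretely — writing $\A([0,e^a])'$ via its known description (e.g. as germs of holomorphic functions vanishing at infinity on $\hat\C \setminus [0,e^a]$, à la the Köthe–Grothendieck–Silva duality) — and bound the relevant seminorms of $H_a^+(f)$ by the seminorms $\norm{f}_j$ defined above; the exponential weight $e^{-(a+1/j)\Rea z}$ in $\norm{\cdot}_j$ is precisely calibrated so that the Laplace-type integral defining the dual element converges and depends continuously on $f$. For surjectivity, I would argue that the image is a closed subspace (using the open mapping / Fréchet–Schwartz machinery, since both spaces are of that type) which contains the total set of point evaluations $\{\delta_t : t \in [0,e^a]\}$, hence is dense, hence everything. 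Finally, the defining relation $\langle H_a^+(f), x^n\rangle = f(n)$ holds on the dense exponential subspace by the explicit computation above, and extends to all of $\mH_a(\omega)$ by joint continuity of the pairing in $f$ (continuity of $H_a^+$) and the fact that $x^n$ is a fixed element of $\A([0,e^a])$.

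The main obstacle I expect is the surjectivity claim, and more specifically making the identification of $\A([0,e^a])'$ explicit enough that one can both (a) write down $H_a^+(f)$ as an actual holomorphic function or measure, and (b) check that the map onto this concrete realization is onto. The growth bound $|f(z)| \le Ce^{C|\Rea z|}$ only controls $f$ on the asymptotic halfplane $\omega$, so one must be careful that the inverse-Laplace / Borel-type reconstruction of $f$ from its dual element stays within $\mH_a(\omega)$ and not merely within some larger space; this is where the precise geometry of $\omega$ (the cones $\omega_{K_n}$ opening to the right) and the indexing of the seminorms by the sets $\Gamma_j$ really matter. A secondary technical point is confirming that the exponentials (or the measures producing them) are genuinely dense in $\mH_a(\omega)$ in its Fréchet topology — this presumably rests on a Phragmén–Lindelöf / quasi-analyticity argument showing that a functional on $\mH_a(\omega)$ annihilating all $e^{\lambda z}$, $\lambda \le a$, must vanish. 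Since the statement is cited from \cite[4.1]{DL}, I would expect the actual proof to invoke that reference's apparatus rather than redo it, but the sketch above is how one would reconstruct it from scratch.
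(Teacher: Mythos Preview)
The paper does not prove this theorem at all: it is stated and attributed to \cite[4.1]{DL} with no argument given, and is then used as a black box in the proof of the Hardy-operator theorem. There is therefore nothing in the paper to compare your sketch against; you yourself note this in your final sentence.

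As for the sketch on its own merits, the overall shape (Laplace/Borel-type transform, continuity from the weighted seminorms, surjectivity via an open-mapping argument and totality of point evaluations) is a plausible outline of how such a result is typically established. The weakest link is the density claim: you assert that the span of exponentials $z\mapsto e^{\lambda z}$ is dense in $\mH_a(\omega)$, but this is far from obvious for a Fr\'echet space of entire-type functions defined on an asymptotic halfplane, and the Phragm\'en--Lindel\"of argument you allude to would itself be the substantial part of the proof. In practice the construction in \cite{DL} proceeds the other way around --- one defines $H_a^+(f)$ directly via a contour-integral (Mellin/Laplace inversion) formula and checks the moment identity $\langle H_a^+(f),x^n\rangle=f(n)$ by residue calculus or by deforming the contour, rather than by first verifying it on exponentials and then extending by density. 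Your route is not wrong in spirit, but the density step you treat as a ``secondary technical point'' is in fact where most of the work would lie.
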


\begin{theorem}[{\cite[2.6]{DL1}}]\label{B}
The map 
\[
 \map{\B}{\Ar_b'}	{M(\R)},\quad  \B (F) g(y)=\langle g(y\cdot), F\rangle
\]
is a linear homeomorphism and the multiplier sequence of $\B(F)$ is equal to the sequence of
moments of the analytic functional $F$, i.e. to $(\langle z^n,F\rangle)_{n\in\N}$.
\end{theorem}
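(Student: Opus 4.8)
The plan is to realize $\B$ as a composition of two homeomorphisms that are already at hand: the Cauchy--Fantappi\`e transform identifying $\Ar_b'$ with $\Ho$, and the inverse of the representation isomorphism of Theorem~\ref{DL1}. Concretely, I would first verify that $\B(F)$ is a well-defined continuous operator on $\Ar$, then compute its multiplier sequence directly, and finally read off bijectivity and bicontinuity by matching $\B$ against those two maps. The only genuinely non-formal input is the duality $\Ar_b'\cong\Ho$, which rests on the fact that every analytic functional on $\R$ has a \emph{compact} carrier; once that is granted, the rest is bookkeeping.

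For well-definedness and continuity of $\B(F)$, fix $g\in\Ar$ and a complex neighbourhood $U\supset\R$ with $g\in H(U)$. One chooses complex neighbourhoods $V,W$ of $\R$ small enough that $yz\in U$ for $y\in V$, $z\in W$; then $(y,z)\mapsto g(yz)$ is holomorphic on $V\times W$, so by Cauchy estimates $y\mapsto g(y\,\cdot)$ is a holomorphic $H(W)$-valued curve on $V$. Since $F$ is continuous on $\Ar$, hence on $H(W)$, the scalar function $y\mapsto\langle g(y\,\cdot),F\rangle$ is holomorphic on $V$, i.e. $\B(F)g\in\Ar$. The same estimates give $\norm{\B(F)g}_{\infty,\overline V}\le C\,\norm{g}_{\infty,L}$ for a compact $L\subset U$ depending only on $U$ and $F$, so $\B(F)\colon H(U)\to H(V)\hookrightarrow\Ar$ is continuous for every $U$, hence $\B(F)\in L(\Ar)$; linearity in both $F$ and $g$ is clear. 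Evaluating on monomials, $\B(F)(x^n)(y)=\langle g(y\,\cdot),F\rangle$ with $g=x^n$, i.e. $\langle y^n z^n,F\rangle=y^n\langle z^n,F\rangle$, so $x^n$ is an eigenvector of $\B(F)$ with eigenvalue $m_n:=\langle z^n,F\rangle$. Thus $\B(F)\in M(\R)$ with multiplier sequence $(\langle z^n,F\rangle)_n$, which is the moment claim; in particular $\B(F)=0$ forces all moments to vanish, hence $F=0$ by density of polynomials in $\Ar$, so $\B$ is injective.

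It remains to get surjectivity and bicontinuity. Let $C\colon\Ar_b'\to\Ho$ be the Cauchy transform, $C(F)(z)=\langle \tfrac{1}{z-\cdot},F\rangle$; expanding $\tfrac{1}{z-w}=\sum_n w^n z^{-(n+1)}$ for large $|z|$ gives $C(F)(z)=\sum_n\langle z^n,F\rangle\,z^{-(n+1)}$, so the Laurent coefficients of $C(F)$ at infinity are exactly $(\langle z^n,F\rangle)_n$. By the classical duality for analytic functionals carried by a compact real set --- every $F\in\Ar'$ is carried by some $[-N,N]$, and $H([-N,N])'\cong H_0(\hat\C\setminus[-N,N])$ via $C$ --- the map $C$ is a linear homeomorphism onto $\Ho$. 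On the other hand the isomorphism of Theorem~\ref{DL1} sends a multiplier with sequence $(m_n)$ to $\sum_n m_n z^{-(n+1)}\in\Ho$. Comparing the two, $(\text{Theorem~\ref{DL1}})^{-1}\circ C$ sends $F$ to the multiplier with sequence $(\langle z^n,F\rangle)_n$, which by the monomial computation above is precisely $\B(F)$. Hence $\B$ is a composition of two linear homeomorphisms, so it is itself a linear homeomorphism, and the moment formula is already established. The main obstacle, as noted, is justifying the compact-carrier description of $\Ar_b'$ that underlies the homeomorphism $C$; granting it, holomorphic dependence on the parameter $y$, the one-line monomial identity, and the composition of homeomorphisms finish the argument.
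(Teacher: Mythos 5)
The paper offers no proof of this statement at all---it is quoted verbatim from \cite[2.6]{DL1}---so there is nothing internal to compare against and I can only judge your argument on its own terms. There is one step that fails as written. You claim one can choose complex neighbourhoods $V,W$ of $\R$ with $yz\in U$ for all $y\in V$, $z\in W$. No such $V,W$ exist unless $U=\C$: any neighbourhood $W$ of $\R$ contains a point $w$ with $\Ima w=\delta>0$, and $V$ contains all large real $y$, so $VW$ contains the points $yw$ with $\Ima(yw)=y\delta\to\infty$ along a ray, which eventually leave a neighbourhood such as $U=\{z:\ |\Ima z|<1/(1+|\Rea z|)\}$. The well-definedness and continuity of $\B(F)$ must instead be argued locally in $y$ and must already here use the compact carrier of $F$: since $F$ is carried by some $[-N,N]$, for each bounded interval $I\subset\R$ one picks a bounded neighbourhood $V_I\supset I$ and a neighbourhood $W\supset[-N,N]$ with $V_IW\subset U$, obtains holomorphy of $y\mapsto\langle g(y\,\cdot),F\rangle$ on $V_I$, and patches over $I$. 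You invoke the compact-carrier fact only later, for the duality, but it is needed at this first step.

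The structural half of your argument---realizing $\B$ as $(\text{Thm.~\ref{DL1}})^{-1}\circ C$ with $C$ the Cauchy transform, and identifying the two maps by comparing multiplier sequences on monomials---is sound \emph{granting} that $C$ is a topological isomorphism of $\Ar_b'$ onto $\Ho$. But be aware that this K\"othe--Grothendieck--Silva duality (including the topological identification $\Ar_b'\cong\ind_N H_0(\hat\C\setminus[-N,N])$, not merely the set-theoretic one) is the genuine analytic content of the theorem, and you assert it rather than prove it. Moreover, in \cite{DL1} the representation theorem quoted here as Theorem~\ref{DL1} is itself derived from the statement you are proving, so your route establishes an equivalence of the two cited theorems modulo the Cauchy-transform duality rather than an independent proof; within this paper, where both are external citations, that is a legitimate consistency check but not a self-contained argument.
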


We can now prove the following theorem

\begin{theorem}
Let $H\in \LL(\Ar)$ be the Hardy operator, $Hf(x)=\frac{1}{x}\int_0^x f(y) \dy$. The operator $A=\sum_{k=0}^K a_kH^k$, $a_1,\ldots, a_K\in\C$ generates a $C_0$-group on $\Ar$.
\end{theorem}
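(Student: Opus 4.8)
The plan is to apply Corollary~\ref{wniosek} (equivalently Theorem~\ref{Ttzwarty}): it suffices to show that for every $t\in\R$ the function $f_t(z)=\sum_{n=0}^\infty \exp(tP(1/n))z^n$ — wait, let me reconsider the multiplier sequence. The Hardy operator $H$ has multiplier sequence $m_n=\frac{1}{n+1}$, since $H(x^n)=\frac{1}{x}\int_0^x y^n\,dy=\frac{x^n}{n+1}$. Hence $A=\sum_{k=0}^K a_k H^k$ has multiplier sequence $\bigl(P(\tfrac1{n+1})\bigr)_{n\in\N}$ where $P(w)=\sum_{k=0}^K a_k w^k$, and the semigroup (if it exists) has multiplier sequence $\bigl(\exp(tP(\tfrac1{n+1}))\bigr)_{n\in\N}$ for $t\in\R$. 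Because $P(\tfrac1{n+1})\to P(0)=a_0$, the sequence $\bigl(\exp(tP(\tfrac1{n+1}))\bigr)_n$ is bounded for each fixed $t$, which already suggests the relevant power series have infinite radius of convergence entries that are well controlled; this is the structural reason a group (not just a semigroup) should appear — replacing $t$ by $-t$ causes no problem.

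First I would introduce a Mellin function for the multiplier sequence of $A$: the function $z\mapsto P(\tfrac1{z+1})$ (or $z\mapsto P(\tfrac1z)$ after a shift) is holomorphic on a neighbourhood of $\{\Rea z > 0\}$, in particular on an asymptotic halfplane $\omega$, and is bounded there (it tends to $a_0$ as $\Rea z\to\infty$ and has at worst a pole at $z=-1$, outside $\omega$), so it certainly satisfies the growth bound $\abs{f(z)}\le Ce^{C\abs{\Rea z}}$; thus it lies in $\mH(\omega)$, and in fact in $\mH_a(\omega)$ for every $a$ since it is \emph{bounded} on each $\Gamma_j$ (so $\norm{f}_j<\infty$ for any $a$). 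Then $g_t(z):=\exp\bigl(t\,P(\tfrac1{z+1})\bigr)$ is also holomorphic and bounded on $\omega$ — here using $t\in\R$ is essential: $\abs{\exp(tP(\tfrac1{z+1}))}=\exp(t\,\Rea P(\tfrac1{z+1}))$, and $\Rea P(\tfrac1{z+1})$ is bounded on $\omega$ — so $g_t\in \mH_a(\omega)$ for every $a$ and every $t\in\R$, with $g_t(n)=\exp(tm_n)$.

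Next, by Theorem~\ref{H+} the functional $H_a^+(g_t)\in\A([0,e^a])'\subset\Ar_b'$ has moment sequence $\bigl(g_t(n)\bigr)_n=\bigl(\exp(tm_n)\bigr)_n$, and by Theorem~\ref{B} the operator $\B(H_a^+(g_t))$ is a multiplier on $\Ar$ with exactly this multiplier sequence; hence $T_t:=\B(H_a^+(g_t))$ is a multiplier for every $t\in\R$, which is the first half of condition $(ii)$ in Theorem~\ref{Ttzwarty}. It remains to verify the continuity (boundedness) requirement: I would fix $a$ large enough and a compact interval of $t$-values, say $t\in[-t_0,t_0]$, and observe that the seminorms $\norm{g_t}_j$ are uniformly bounded for $t$ in this interval (since $\Rea P(\tfrac1{z+1})$ is bounded on $\omega$ by a constant independent of $t$, the factor $e^{-(a+1/j)\Rea z}$ dominates for $a$ large), so $\{g_t : \abs{t}\le t_0\}$ is a bounded subset of the Fréchet space $\mH_a(\omega)$; by continuity of $H_a^+$ and of $\B$ the set $\{T_t : \abs t\le t_0\}$ is equicontinuous in $L(\Ar)$, whence $\{T_tf : \abs t\le t_0\}$ is bounded in $\Ar$ for every $f$. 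Together with the semigroup (in fact group) identity $T_tT_s=T_{t+s}$, which holds on monomials since $\exp(tm_n)\exp(sm_n)=\exp((t+s)m_n)$ and extends by density of polynomials, Theorem~\ref{Ttzwarty} (applied on $\R_+$ and on $-\R_+$ separately, using $T_{-t}=(T_t)^{-1}$) gives that $A$ generates a $C_0$-group.

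The main obstacle I anticipate is the bookkeeping around the asymptotic halfplane and the spaces $\mH_a(\omega)$: one must choose $\omega$ so that $P(\tfrac1{z+1})$ is genuinely holomorphic and bounded on it (the pole at $z=-1$ must be excluded — a standard halfplane $\{\Rea z>0\}$ shifted slightly suffices, and an asymptotic halfplane is even more generous), and one must check the growth/boundedness estimates on the $\Gamma_j$ \emph{uniformly in $t$} over compact sets, which is where the hypothesis $t\in\R$ (rather than $t\in\C$) does the real work. Everything else — the semigroup law, the identification of the generator as $A$ on polynomials and its extension by closedness of the generator exactly as in the proof of Theorem~\ref{Ttzwarty} — is routine.
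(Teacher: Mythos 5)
Your proposal is correct and follows essentially the same route as the paper: the same Mellin function $\mu_t(z)=\exp\bigl(tP(\tfrac{1}{z+1})\bigr)$ on an asymptotic halfplane keeping $z=-1$ at a distance, pushed through $H_a^+$ and $\B$ (Theorems \ref{H+} and \ref{B}) to realize the operators $T_t$ as multipliers with sequence $(\exp(tm_n))_n$. The only (minor) difference is that you close the argument via the boundedness criterion $(iii)$ of Theorem \ref{Ttzwarty}, showing $\{\mu_t:\abs{t}\le t_0\}$ is bounded in $\mH_a(\omega)$, whereas the paper verifies the continuity criterion $(ii)$ by estimating $\norm{\mu_t-\mu_{t+h}}_j$ directly; both are equivalent by that theorem.
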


\begin{proof}
 The multiplier sequence of the Hardy operator $H$ equals $(\frac{1}{n+1})_{n\in\N}$. Hence the multiplier sequence of $(A,(m_n))$ equals $m_n=\sum_{k=0}^K \frac{a_k}{(n+1)^k}$. We will use Theorem \ref{Ttzwarty}, hence it is enough to show that sequences $\left(\exp\left(\sum_{k=0}^K \frac{ta_k}{(n+1)^k}\right)\right)_{n\in\N}$ are multiplier sequences for multipliers $T_t$ and that the mapping $\map{Tf}{\R}{\Ar}$, $Tf(t)=T_tf$ is continuous. From Theorem (\ref{B}) the first condition is equivalent to the existence of functionals $F_t\in\Ar'$ satisfying $\langle F_t,x^n \rangle=\sum_{k=0}^K \frac{a_k}{(n+1)^k}$, and due to Theorem \ref{H+} it is equivalent to the existence of the Mellin functions $\mu_t\in \mH_a(\omega)$ for $\left(\exp\left(\sum_{k=0}^K \frac{a_k}{(n+1)^k}\right)\right)_{n\in\N}$.

For the proof it is enough to find the asymptotic halfplane $\omega$ and Mellin functions $\mu_t\in \mH_a(\omega)$ such that the mapping $\map{\phi}{\R}{\mH_a(\omega)}$, $t\mapsto \mu_t$ is continuous. Indeed, consider the following diagram
\begin{align*}
\R \xrightarrow{\phi} \mH_a(\omega) \xrightarrow{H^+_a} \A([0,e^a])' \xrightarrow{\B} M(\R).
\end{align*}
Recall that $H^+$, $\B$ are continuous (Theorems \ref{H+}, \ref{B}) with $\B \circ H^+ \circ \phi (t)= T_t$.
Hence, if the function $\phi$ is continuous then the function $t\mapsto T_tf$ is continuous.

Let $\omega$ be an asymptotic halfplane such that $\kappa_1=-\frac{1}{2}$, $\kappa_n=0$ for all $n\geq 2$ and consider the functions $\mu_t= \exp\left(\sum_{k=0}^K \frac{ta_k}{(z+1)^k}\right)$, $t\geq 0$. 

Then $\mu_t$ is clearly holomorphic on $\omega$ and for $z\in \omega\subset \{\Rea z>-\frac{1}{2}\}$ it satisfies
\begin{align*}
 \abs{\mu_t(z)}&=\abs{\exp\left(\sum_{k=0}^K \frac{ta_k}{(z+1)^k}\right)}\leq \exp\left(\sum_{k=0}^K \abs{\frac{ta_k}{(z+1)^k}}\right) \leq \exp\left(\sum_{k=0}^K 2^k\abs{t{a_k}}\right)\\
 &< \exp\left(\sum_{k=0}^K 2^k\abs{{ta_k}} +\frac{1}{2}\right)\exp(\Rea z).
\end{align*}
Hence $\{\mu_t\}_{t\geq 0}\subset \mH(\omega)$ and because
\begin{align*}
 \mu_t(n)=\exp\left(\sum_{k=0}^K \frac{ta_k}{(n+1)^k}\right) ,
\end{align*}
we get that functions $\mu_t$ are Mellin functions for the sequence $\left(\exp\left(\sum_{k=0}^K \frac{ta_k}{(n+1)^k}\right)\right)_{n\in \N}$.

Now we will show that $\mu_t\in \mH_a(\omega)$ for any $a>0$ and all $t\in \R$. We compute

\begin{align*}
\sup_{z\in \Gamma_j}\abs{\mu_t(z)}e^{-(a+\frac{1}{j})\Rea z}
&=\sup_{z\in \Gamma_j}   \abs{\exp\left(\sum_{k=0}^K \frac{ta_k}{(z+1)^k}\right)}\exp\left({-\left(a+\frac{1}{j}\right)\Rea z}\right)\\
&\leq \exp\left(\sum_{k=0}^K 2^k\abs{t{a_k}}\right) \sup_{z\in \Gamma_j}\exp\left({-\left(a+\frac{1}{j}\right)\Rea z}\right)\\
&<\exp\left(\sum_{k=0}^K 2^k\abs{t{a_k}}\right) \exp\left({\left(a+\frac{1}{j}\right)\frac{1}{2}}\right) <\infty.
\end{align*}

To finish the proof we need to prove the continuity of the map $\map{\phi}{\R}{\mH_a(\omega)}$, $\phi(t)=\mu_t$. 

Fix $t\in\R$, $j\geq 1$. Then
\begin{align*}
\norm{\mu_t-\mu_{t+h}}_j
&= \sup_{z\in \Gamma_j}\abs{\mu_t(z)-\mu_{t+h}(z)}\exp\left(-\left(a+\frac{1}{j}\right)\Rea z\right) \\
&= \sup_{z\in \Gamma_j}\abs{\mu_t(z)}\abs{(1-\mu_h(z))}\exp\left(-\left(a+\frac{1}{j}\right)\Rea z\right)\\
&<\exp\left(\sum_{k=0}^K 2^k\abs{t{a_k}}+\frac{1}{2}(a+j^{-1})\right)\sup_{z\in \Gamma_j}\abs{(1-\mu_h(z))}.
\end{align*}
For the last component we have that
\begin{align}\label{prod}
\sup_{z\in \Gamma_j}\abs{(1-\mu_h(z))} = \sup_{z\in \Gamma_j}\abs{(1-\prod_{k=0}^K\exp\left(\Rea({a_k})h\frac{\Rea \overline{(z+1)}^k}{\abs{z+1}^k}\right)\exp\left(i\Ima({a_k})h\frac{\Ima \overline{(z+1)}^k}{\abs{z+1}^k}\right)}.
\end{align}
Since for all complex numbers $z\in \C$  
\begin{align*}
\frac{\Rea z}{\abs{z}}\leq 1 \quad \text{and} \quad \frac{\Ima z}{\abs{z}}\leq 1 
\end{align*}
all components of the product in (\ref{prod}) tend to $1$ uniformly on $\Gamma_j$ as $h$ tends to $0$. Hence
\[
 \norm{\mu_t-\mu_{t+h}}_j \xrightarrow{{h}\rightarrow 0} 0.
\]

\end{proof}

\end{document}